\newtheorem{theorem}{Theorem}[section]
\newtheorem{lemma}[theorem]{Lemma}
\newtheorem{proposition}[theorem]{Proposition}
\theoremstyle{definition}
\theoremstyle{remark}
\newtheorem{remark}[theorem]{Remark}
\numberwithin{equation}{section}
\newcommand{\nn}{n+\frac{1}{2}}
\newcommand{\rnn}{r^{n+\frac{1}{2}}}
\newcommand{\rnnm}{r^{n-\frac{1}{2}}}
\newcommand{\az}{\theta}
\newcommand{\szm}{\partial_{y=0}}
\newcommand{\ste}{\partial_{\theta=\theta_1}}
\newcommand{\std}{\partial_{\theta=\theta_2}}
\newcommand{\HH}{\tilde{H}_0^1(\mathfrak{D})}
\newcommand{\om}{\mathcal{W}}
\newcommand{\up}{\mathcal{U}}
\newcommand{\vp}{\mathcal{V}}
\newcommand{\bl}{\textit{B}}
\begin{document}
\title{Crank-Nicolson Finite Element Discretizations\\
for a 2D Linear Schr{\"o}dinger-Type Equation\\
Posed in a Noncylindrical Domain}
\author{D.~C. Antonopoulou, G.~D. Karali, M. Plexousakis, G.~E. Zouraris}
\address{Department of Applied Mathematics, University of
Crete, 714 09 Heraklion, Greece, and Institute of
Applied and Computational Mathematics, FORTH, Greece.}
\email{danton@tem.uoc.gr}
\email{gkarali@tem.uoc.gr}
\email{plex@tem.uoc.gr}

\address{Department of Mathematics,
University of Crete, 714 09 Heraklion, Greece.}
\email{zouraris@math.uoc.gr}
\subjclass[2000]{65M12, 65M15, 65M60}
%
%
\keywords{Schr\"odinger equation, variable domains, Robin
condition, elliptic regularity, finite element methods, \textit{a
priori} error estimates, Underwater Acoustics.}
\begin{abstract}
Motivated by the paraxial narrow--angle approximation of the
Helm\-holtz equation in domains of variable topography that
appears as an important application in Underwater Acoustics, we
analyze a general Schr\"odinger-type equation posed on
two-dimensional variable domains with mixed boundary conditions.
The resulting initial- and boundary-value problem is transformed
into an equivalent one posed on a rectangular domain and is
approximated by fully discrete, $L^2$-stable, finite element,
Crank--Nicolson type schemes. We prove a global elliptic
regularity theorem for complex elliptic boundary value problems
with mixed conditions and derive $L^2$-error estimates of optimal
order. Numerical experiments are presented which verify the optimal
rate of convergence.
\end{abstract}
\maketitle
\section{Introduction}
\subsection{The physical problem}
The standard narrow-angle Parabolic Equation
(PE) in three space dimensions is the following
Schr\"odinger-type equation
\begin{equation}\label{NA}
\psi_{r}=\tfrac{\mathrm{i}}{2\,k_0}\,\left(\,\psi_{zz} +
\tfrac{1}{r^2}\,\psi_{\theta\theta}\,\right)
+\mathrm{i}\,\tfrac{k_0}{2}\,(n^2-1)\,\psi,
\end{equation}
that models the long-range sound propagation in the sea, and is
used in the context of underwater acoustics as the paraxial and
far-field approximation of the Helmholtz equation in the presence
of cylindrical symmetry, cf. \cite{ref54,ref12}. Here, $r_{\rm
max}\ge r\ge r_{\rm min}>0$ is the horizontal distance from a
harmonic point source placed on the $z$ axis and emitting at a
frequency $f_0$. The function $\psi=\psi(r,z,\theta)$ depending on
range, depth and azimuth measures the acoustic pressure in
inhomogeneous, weakly range-dependent marine environments. The
depth variable $z\ge 0$ is increasing downwards while the azimuth
varies in the interval $[\theta_{\rm min},\theta_{\rm max}]$;
$k_0=\tfrac{2\,\pi\,f_0}{c_0}$ is a reference wave number, the constant $c_0$
is a reference sound speed, $n(r,z,\theta) = c_0/c(r,z,\theta)$
is the refraction index and $c(r,z,\theta)$ is the sound speed in
the water. The bottom topography, being variable, is identified
in cylindrical coordinates by a positive surface $z =
s(r,\theta)$.
\par
For a fixed range $r\in[r_{\rm min},r_{\rm max}]$, we define the
$r$-dependent space domain:
\begin{equation*}
\Omega(r):=\Big{\{}(z,\theta)\in
\mathbb{R}^2:\;\theta\in[\theta_{\rm min},\;\theta_{\rm
max}],\;z\in[0,s(r,\theta)]\Big{\}},
\end{equation*}
where obviously, $\partial
\Omega(r)=\displaystyle{\cup_{i=1}^4}\omega_i(r)$ for
$\omega_1(r):=\{(0,\theta)\in \mathbb{R}^2:\;\theta
\in[\theta_{\rm min},\theta_{\rm max}]\}$,
$\omega_2(r):=\{(z,\theta_{\rm min})\in
\mathbb{R}^2:\;z\in[0,s(r,\theta_{\rm min})]\}$,
$\omega_3(r):=\{(s(r,\theta),\theta)\in
\mathbb{R}^2:\;\theta\in[\theta_{\rm min},\theta_{\rm max}]\}$,
and $\omega_4(r):=\{(z,\theta_{\rm max})\in
\mathbb{R}^2:\;z\in[0,s(r,\theta_{\rm max})]\}$ (cf.
Figure~\ref{rdomain}).
\begin{figure}[ht]
\centering
\includegraphics[height=4.4truecm, width=9.0truecm]{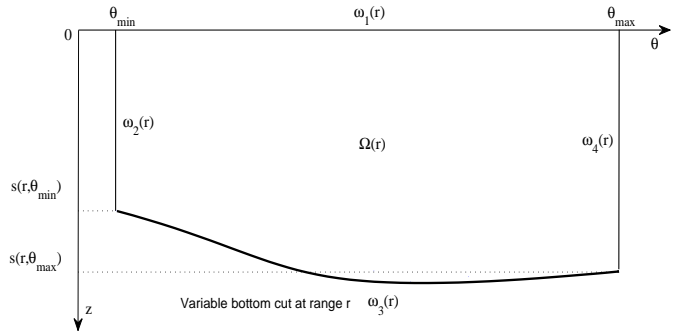}
\caption{The range dependent domain $\Omega(r)$.}\label{rdomain}
\end{figure}
\par
The horizontal sea surface of the naval environment is assumed to
be perfectly absorbing, so a free--release condition $\psi=0$ is
imposed on $\omega_1(r)$. We also set $\psi=0$ on the minimum and
maximum azimuthal values i.e. at $\omega_2(r)\cup\omega_4(r)$. We
denote by
$\omega_D(r):=\omega_1(r)\cup\omega_2(r)\cup\omega_4(r)$ the
piecewise linear boundary segment where these homogeneous Dirichlet
conditions are imposed. The acoustically rigid bottom is
mathematically modeled by the Neumann boundary condition
$\frac{\partial \psi}{\partial \eta_s}=0$ along the bottom
surface $z=s(r,\cdot)$, i.e., the variable boundary segment
$\omega_3(r)$ of $\Omega(r)$. Even in one space dimension,
the well-posedness of the standard narrow--angle Parabolic
Equation \eqref{NA} with Neumann condition was proved under the
assumption that the bottom topography is strictly monotone, cf.
\cite{ref1}. Considering the same problem, in \cite{ref2,AnDZ}
the authors verified numerically that significant instabilities
develop even in strictly monotone downsloping bottom profiles.

Abrahamsson and Kreiss in \cite{ref1,ref2} proposed alternatively
the use of a Robin-type condition as an approximation of the
Neumann one that yields a well-posed initial and boundary value
problem when the domain topography is variable. This approximate
condition in two dimensions has the form (cf. \cite{ref53}):
\begin{equation}\label{A-K}
\psi_{z}-\frac{s_{\theta}}{r^2}\psi_{\theta}=\mathrm{i}k_{0}s_{r}
\psi\quad\text{at}\quad z=s(r,\theta).
\end{equation}
We impose \eqref{A-K} at $z=s$, we set in \eqref{NA}
$a:=\frac{1}{2k_0}$,
$\beta_{\psi}(r,z,\theta):=\frac{k_0}{2}(n^2-1)$ and arrive at the
following initial and boundary value problem (ibvp) of
Schr\"odinger type:
\begin{equation}\label{uapro}
\begin{split}
&\psi_{r}=\mathrm{i\;div}(D_{\rm a}\nabla\psi)+\mathrm{i}\beta_{\psi}\psi\;\;\;\;\;\;\;\;\mbox{in}\;\;\;S,\\
&\psi=0\;\;\;\;\;\;\;\;\;\;\;\;\;\;\;\;\;\;\;\;\;\;\;\;\;\;\;\;\;\;\;\;\;\;\;\;\;\;\;\,\mbox{on}\;\;\;\omega_D(r)\;\;\;\forall
r\in[r_{\rm min},r_{\rm max}],\\
&\eta_{\rm s}^t(D_{\rm a}
\nabla\psi)=\frac{\mathrm{i}}{2}\frac{s_{r}}{\sqrt{1+s_{\theta}^2}}\psi
\;\;\;\;\;\;\;\;\,\mbox{on}\;\;\;\omega_3(r)\;\;\;\forall
r\in[r_{\rm min},r_{\rm max}],\\
&\psi(r_{\rm
min},z,\theta)=\psi_0(z,\theta)\;\;\;\;\;\;\;\;\;\;\;\;\;\,\mbox{on}\;\;\;
\Omega(r_{\rm min}),
\end{split}
\end{equation}
posed on the non-cylindrical domain $S:=\cup_{r\in[r_{\rm
min},r_{\rm max}]}\Omega(r)$. Here, the gradient is with respect
to the $z, \theta$ variables, $D_{\rm a}:=\begin{pmatrix}
  _{a} & _{0} \\
  _{0} & _{a/r^2}
\end{pmatrix}$, $\eta_{\rm s}=-\frac{(-1,s_{\theta})^t}{\sqrt{1+s_{\theta}^2}}$ is the vector normal to the surface
$z=s$ and the initial condition $\psi_0$ models the acoustic
source.
\begin{remark}
{\rm In view of the ibvp \eqref{uapro}, we observe that the same
term $D_{\rm a}\nabla \psi$ appears at the equation as well as at
the left-hand side of the Abrahamsson-Kreiss Robin condition.}
\end{remark}

%
\subsection{Change of variables}
The focus of our interest herein is to write the problem into an
equivalent form posed on a cylindrical domain where simpler
stable numerical schemes can be applied. This is achieved by a
horizontal change of variables combined with an exponential
transformation.
Specifically, we let
\begin{equation}\label{tr1}
\begin{split}
&y = z/s(r,\theta),\;\;\;\; v(r,y,\theta) =
e^{-q(r,\theta)}\psi(r,z,\theta),\\
&\Omega(r)\hookrightarrow\mathfrak{D}:=(0,1)\times(\theta_{\rm
min},\theta_{\rm max}),\\
&S\hookrightarrow[r_{\rm min},r_{\rm max}]\times \mathfrak{D},
\end{split}
\end{equation}
where $q(r,\theta) = -\frac{1}{2}\ln s(r,\theta)$
\cite{ref53,refthes}. With this choice of $q$,
the initial-{} and boundary-value problem \eqref{uapro} takes
the following form (see \cite{refthes} for the details):
\begin{equation}\label{cvpro}
\begin{split}
&v_{r}=\mathrm{i\;div}(\widehat{D}_{\rm a}\nabla
v)+y\frac{s_r}{s}v_y+\mathrm{i}\beta_v v\;\;\;\;\;\;\;\;
\mbox{in}\;\;\;[r_{\rm min},r_{\rm max}]\times \mathfrak{D},\\
&v=0\;\;\;\;\;\;\;\;\;\;\;\;\;\;\;\;\;\;\;\;\;\;\;\;\;\;\;\;\;\;\;\;\;\;\;\;\;\;\;\,\;\;\mbox{at}\;\;\;y=0\;\;\;\forall
(r,\theta)\in[r_{\rm min},r_{\rm max}]\times[\theta_{\rm min},\theta_{\rm max}],\\
&\eta^t(\widehat{D}_{\rm a}\nabla v) =\mathrm{i}\gamma_{\rm bc}v
\;\;\;\;\;\;\;\;\;\;\;\;\;\;\;\;\;\;\;\;\;\;\mbox{at}\;\;\;y=1\;\;\;\forall
(r,\theta)\in[r_{\rm min},r_{\rm max}]\times[\theta_{\rm min},\theta_{\rm max}],\\
&v(r_{\rm min},y,\theta)=v_0(y,\theta)
\;\;\;\;\;\;\;\;\;\;\;\;\;\,\;\;\forall(y,\theta)\in
\overline{\mathfrak{D}},
\end{split}
\end{equation}
where $\eta:=(1,0)^t$, $\gamma_{\rm bc}(r,\theta):=
\frac{\mathrm{1}}{2}\Big{[}\frac{s_{r}}{s}+\mathrm{i}\frac{a}{r^2}\Big{(}\frac{s_{\theta}}{s}\Big{)}^2\Big{]}$,
and $\widehat{D}_{\rm a}:=\begin{pmatrix}
  _{\widehat{a}} & _{\widehat{\beta}} \\
  _{\widehat{\beta}} & _{\widehat{\gamma}}
\end{pmatrix}$
with
\begin{equation*}
\begin{split}
\widehat{a}(r,y,\theta)=\frac{a}{s^2}+\frac{a}{r^2}y^2\Big{(}\frac{s_{\theta}}{s}\Big{)}^2,\;\;
\widehat{\beta}(r,y,\theta)=-\frac{a}{r^2}y\Big{(}\frac{s_{\theta}}{s}\Big{)},\;\;
\widehat{\gamma}(r)=\frac{a}{r^2},
\end{split}
\end{equation*}
$\beta_v:=\beta_{\psi}+\frac{a}{r^2}\frac{3s_{\theta}^2-2ss_{\theta\theta}}{4s^2}-\mathrm{i}\frac{s_r}{2s}$,
and $v_0(y,\theta)=\sqrt{s(r_{\rm min},\theta)}u_0(ys(r_{\rm
min},\theta),\theta)$.

We note that $\widehat{D}_{\rm a}$ is a real, symmetric and
positive definite matrix and therefore ${\rm
det}(\widehat{D}_{\rm a})>0$, \cite{refthes}. Furthermore, due to
the definition of $q$ the coefficient of $v_y$ in the first
equation is a real function, which at $y=1$ equals to $2{\rm
Re}\gamma_{\rm bc}$.

 Certain three-dimensional effects have been
observed to influence the acoustic transmission in variable
domains mainly because the refraction index depends on $r$, $z$,
$\theta$ and since significant reflections may occur between the
bottom and the see surface (cf. \cite{ref33,
ref21,ref56,ref16,stcas}). In \cite{ref53}, F. Sturm considered
the Narrow--angle parabolic equation with the
Abra\-hamsson-Kreiss condition in three dimensions over a
variable bottom in the case of a multilayered fluid medium.

The single layer case in the presence of azimuthal symmetry where
the physical problem is posed on one-dimensional variable domains
has been analyzed rigorously in \cite{ref8,AnDZ}. More
specifically, in \cite{ref8} the authors constructed finite
difference schemes and proved optimal rate of convergence. In
\cite{AnDZ}, error estimates of optimal order in the $L^2$- and
$H^1$-norms have been proved for semidiscrete and fully discrete
Crank-Nicolson-Galerkin finite element approximations.
Discontinuous Galerkin methods for the linear Schr\"odinger
equation Dirichlet problem in non-cylindrical domains of
$\mathbb{R}^m$, $m\ge 1$, were analyzed in \cite{An-P}. When
$m=1$ the resulting problem is the standard Narrow--angle
parabolic approximation modeling an acoustically soft bottom; for
this case the authors investigated theoretically and numerically
the order of convergence using finite element spaces of piecewise
polynomial functions. The Wide--angle parabolic equation consists
an alternative approximation model of Helmholtz equation in
underwater acoustics; for a rigorous numerical analysis and
numerical experiments on this model cf.
\cite{AD,AkDZ1996,AnDSZ2008,DSZ2009}.
\subsection{Generalization: The mathematical problem}
Motivated by the properties of the physical problem, for the sake
of a more general mathematical setting, in our analysis we
consider the following initial-{} and boundary-value problem of
Schr\"odinger type with variable coefficients and mixed boundary
conditions (Dirichlet-Robin)
\begin{equation}\label{divgen}
\begin{split}
&u_{r}=\mathrm{i\;div}(D\nabla u)+b\nabla u+\mathrm{i}\beta
u+F\;\;\;\;\;\;\;\;
\mbox{in}\;\;\;[r_{\rm min},r_{\rm max}]\times \mathfrak{D},\\
&u=0\;\;\;\;\;\;\;\;\;\;\;\;\;\;\;\;\;\;\;\;\;\;\;\;\;\;\;\;\;\;\;\;\,\;\;\mbox{at}\;\;\;y=0\;\;\;\forall
(r,\theta)\in[r_{\rm min},r_{\rm max}]\times[\theta_{\rm min},\theta_{\rm max}],\\
&\eta^t(D\nabla u) =\mathrm{i}\lambda
u\;\;\;\;\;\;\;\;\;\;\;\;\;\;\;\;\;\;\;\mbox{at}\;\;\;y=1\;\;\;\forall
(r,\theta)\in[r_{\rm min},r_{\rm max}]\times[\theta_{\rm min},\theta_{\rm max}],\\
&u(r_{\rm min},y,\theta)=u_0(y,\theta)
\;\;\;\;\;\;\,\;\;\forall(y,\theta)\in \overline{\mathfrak{D}}.
\end{split}
\end{equation}
Here, $\mathfrak{D}=(0,1)\times(\theta_{\rm min},\theta_{\rm
max})$, $\eta:=(1,0)^t$, while $\beta=\beta(r,y,\theta)$,
$F=F(r,y,\theta)$ and $\lambda=\lambda(r,\theta)$ are complex-valued
functions.

For the rest of this paper, we shall assume that the following
conditions are satisfied:
\begin{equation}\label{con0}
D=D(r,y,\theta)\;\text{is a $2\times 2$ real, symmetric matrix with}\;{\rm det}(D)>0\quad\forall\,r,y,\theta,
\end{equation}
\begin{equation}\label{con1}
b=\Big{(}b_1(r,y,\theta),\;b_2(r,y,\theta)\Big{)}\;\;\mbox{is
real},
\end{equation}
and
\begin{equation}\label{con2}
b_1(r,1,\theta)-2{\rm Re}\lambda(r,\theta)\leq 0\;\;\forall\,
r,\theta.
\end{equation}
\begin{remark}
Since $D\in \mathbb{R}^{2\times 2}$, the condition \eqref{con0}
gives equivalently that  $D$ is either positive or negative
definite for any $r,y,\theta$, which in turn relates to the
ellipticity  of the operator ${\rm div}(D\nabla\cdot)$.
\end{remark}
\begin{remark}
As we shall prove later, the conditions \eqref{con1} and
\eqref{con2} are sufficient for $L^2$-stability, while when
\eqref{con2} holds as equality the problem is $H^1$-stable also,
cf. Theorem \ref{3.3.1} and Remark \ref{h1st}.
\end{remark}
\begin{remark}
The form of the Robin boundary condition, considering only the
first order terms, is related to the elliptic regularity of
elliptic problems with mixed Dirichlet-Robin conditions in two
dimensions proved in Theorem \ref{3.2.10}. The autonomous Section
4 of this paper presents a detailed proof of this argument.
\end{remark}
\begin{remark}
The acoustic problem \eqref{cvpro} is a specific case of the
problem \eqref{divgen} for $u:=v$, $u_0:=v_0$,
$D:=\widehat{D}_{\rm a}$, $b:=(y\frac{s_r}{s},0)$,
$\beta:=\beta_v$, $F:=0$ and $\lambda:=\gamma_{\rm bc}$,
satisfying \eqref{con0}, \eqref{con1}, and \eqref{con2} as
equality, \cite{refthes}.
\end{remark}

\subsection{Main results}
The problem analyzed here is motivated by an important physical
application. Nevertheless, the general mathematical setting
encompasses the very interesting aspect of approximating
numerically a multi-dimensional ibvp of Schr\"odinger-type with
mixed conditions and coefficients depending on the evolutionary
variable.

In this paper, we apply the Galerkin method on the general problem
\eqref{divgen} using piecewise polynomial finite element spaces.
We construct fully discrete Crank--Nicolson-type schemes in $r$
for which we prove stability and optimal rate of accuracy in the
$L^2$-norm. Numerical verification of the optimal rate of convergence
is also presented.

The weak formulation of the problem is presented in Section 2. We
define an appropriate $r$-dependent sesquilinear form which is,
in general, non-Hermitian. As it is common, the rate of accuracy
is investigated by using certain properties of the projection
induced by this form. The projection being $r$-dependent and the
fact that a two-dimensional $r$-dependent Robin boundary
condition appears in \eqref{divgen} make the analysis difficult.
We estimate the projection error and its $r$-derivative in the
$H^1$- and $L^2$-norms (cf. paragraph 2.3, Propositions
\ref{prop1}-\ref{prop4}). The later is accomplished by applying
an Elliptic Regularity Theorem for two-dimensional complex
boundary value problems with mixed Dirichlet and Robin
conditions, proved in Section 5. In the proof of Proposition
\ref{prop4}, where the $r$-derivative of the projection error is
estimated in the $L^2$-norm, we present a very refined argument
when treating the boundary terms.

In Section 3, we write \eqref{divgen} in a weak form and prove
$L^2$-stability, and $H^1$-stability in the case where \eqref{con2} holds as
equality, so that the sesquilinear form is Hermitian. We
then construct a fully discrete Crank-Nicolson scheme in range
$r$ that is shown to be $L^2$-stable. Even though the
evolutionary variable is discretized by a standard Crank-Nicolson
method, the error analysis presented in this section is
non-standard. This is due mainly to the fact that the form and
the projection used are $r$-dependent and calculated at the
mid-points of a uniform range partition. We define properly a
test function split in two terms involving projections applied
on second order derivatives (cf. Remark \ref{tf}), use the
projection estimates of Section 2, and derive an optimal error
estimate in the $L^2$-norm.

A general complex elliptic boundary value problem posed on a
two-dimensional rectangular domain with mixed boundary conditions
is analyzed in Section 4. If Dirichlet or Neumann conditions hold
along the boundary, then in the weak formulation of the boundary
value problem the trace integral terms vanish. A general approach
of proving global regularity, \cite{ref26}, is to prove this
estimate for half-balls, and then by change of variables, stretch
the compact boundary locally and cover it by a finite union of
half-balls. In our case, we analyze a complex elliptic problem
posed on a rectangular domain of $\mathbb{R}^2$. The boundary is
compact and consists of four linear segments along which
Dirichlet and Robin conditions are imposed. We apply directly on
this domain the half-balls technique without change of variables
as the boundary is already stretched locally. Further, we define
appropriate test functions, in order to eliminate the trace terms
from the weak formulation of the problem and prove the regularity
estimate in Theorem \ref{3.2.9}. The result is extended in
Theorem \ref{3.2.10}. Our proof covers a class of Robin
conditions related to the coefficients of the pde of the boundary
value problem, a special case of which is the Abrahamsson-Kreiss
condition of underwater acoustics.

Finally, in Section 5 we report on the results of some numerical
experiments performed with our method, verifying experimentally
the optimal order of convergence.

%
%
\section{An elliptic projection}
\subsection{Preliminaries}
%
Let $\mathfrak{D} = (0,1)\times(\az_1,\az_2)$. For $r$ in
$[r_{\rm min},r_{\rm max}]$ fixed, we define
\begin{equation*}
\partial_{y=y_0}: = \{(r,y_0,\az)\in \mathbb{R}^3: \az\in[\az_1,\az_2]\},\quad
\partial_{\az=\az_0}: = \{(r,y,\az_0)\in \mathbb{R}^3:
y\in[0,1]\},
\end{equation*}
and denote by $H^1(\mathfrak{D})$ the associated usual (complex)
Sobolev space. In order to deal with the Dirichlet boundary condition
we shall make use of the space
\begin{equation*}
\HH = \{u\in H^1(\mathfrak{D}): u|_{\partial\mathfrak{D}_D} = 0\},
\end{equation*}
where $\partial\mathfrak{D}_D:=(\szm) \cup (\ste) \cup (\std)$.
$\HH$ is the space of functions in $H^1(\mathfrak{D})$ which
vanish on $y=0,\az=\az_1,\az=\az_2$. We denote the
$L^2(\mathfrak{D})$ inner product by
$(\cdot,\cdot):L^2(\mathfrak{D})\times
L^2(\mathfrak{D})\rightarrow \mathbb{C}$.
$\|u\|:=\Big{(}\int_{\mathfrak{D}}|u|^2\Big{)}^{\frac{1}{2}}$
denotes the induced $L^2$-norm, while
$\|u\|_1:=\Big{(}\|u\|^2+\|u_y\|^2+\|u_{\az}\|^2\Big{)}^{\frac{1}{2}}$
is the usual $H^1(\mathfrak{D})$ norm.

Let $\partial\mathfrak{D}_R:=\partial_{y=1}$ be the part of
$\partial\mathfrak{D}$ where the Robin boundary condition of
problem \eqref{divgen} is posed, and let
\begin{equation*}
<u,v>:=\int_{\theta_{\rm max}}^{\theta_{\rm
min}}u(1,\theta)\overline{v}(1,\theta)d\theta,
\end{equation*}
denote the inner product on $H^{\frac{1}{2}}(\partial\mathfrak{D}_R)$. In
addition, we shall make use of the norms:
\begin{equation*}
\begin{split}
&|g|_{\frac{1}{2},\partial\mathfrak{D}_R}:=\displaystyle{\inf_{v\in\HH:v|_{\partial\mathfrak{D}_R}=g}}\|v\|_1,\\
&|u|_{-\frac{1}{2},\partial
\mathfrak{D}_R}:=\displaystyle{\sup_{\tilde{v}\in
H^1(\mathfrak{D}),\tilde{v}\neq
0}}\frac{|<u,\tilde{v}>|}{\|\tilde{v}\|_1}.
\end{split}
\end{equation*}

Let $\tau\in{\mathbb N}$ and $S_h$ be a finite dimensional
subspace of $\HH$ consisting of complex-valued functions that are
polynomials of degree less than or equal to $\tau$ in each interval of
a non-uniform partition of $\mathfrak{D}$ with maximum length
$h\in(0,h_{\star}]$. It is well-known, \cite{ref14}, that the
following approximation property holds:
\begin{equation}\label{2.35n}
\begin{split}
\displaystyle{\inf_{\chi\in S_h}}\big\{
\|v-\chi\|+h\|v-\chi\|_1\big\}\leq
C\,h^{s+1}\,\|v\|_{s+1},\quad&\forall\,v\in H^{s+1}(\mathfrak{D}),\\
&\forall\,h\in(0,h_{\star}],\quad s=0,\dots,\tau.
\end{split}
\end{equation}
Also, we assume that the following inverse inequality holds:
\begin{equation}\label{2.36n}
\|\phi\|_1\leq\,C\,h^{-1} \,\|\phi\|\quad\forall\,\phi\in
S_h,\quad\forall\,h\in(0,h_{\star}],
\end{equation}
which is true when, for example, the partition of $\mathfrak{D}$
is quasi-uniform, \cite{ref14}.
%

%
%
\subsection{Definition of a sesquilinear form}
Without loss of generality we assume that $D$ is positive
definite. For any $r$ in $[r_{\rm min},r_{\rm max}]$ we define the
sesquilinear form
$\mathcal{B}(r;\cdot,\cdot):\HH\times\HH\rightarrow \mathbb{C}$
\begin{equation}\label{formdef}
\begin{split}
\mathcal{B}(r;v,w):=&(D(r)\nabla v,\nabla
w)-\mathrm{i}\Big{(}\int_{\theta_{\rm min}}^{\theta_{\rm
max}}\lambda(r,\theta)
v(1,\theta)\overline{w}(1,\theta)d\theta-(b(r)
\nabla v,w)\Big{)}\\
&+\delta(v,w),
\end{split}
\end{equation}
for $\delta$ a sufficiently large positive constant. Obviously, it
holds that
\begin{equation}\label{cont}
|\mathcal{B}(r;v,w)|\leq c\|v\|_1\|w\|_1,
\end{equation}
for any $v,w\in \HH$, uniformly in $r$.

We observe that $(D\nabla v,\nabla v)\geq c_0\|\nabla v\|^2$ for
a constant $c_0>0$ uniformly in $r$ and $v$, since $D$ is real
symmetric and positive definite. Therefore, by the trace inequality
we obtain for $v\in\HH$
\begin{equation*}
\begin{split}
{\rm Re}\mathcal{B}(r;v,v)=&(D(r)\nabla v,\nabla
v)+\delta\|v\|^2\\
&+{\rm Im} \Big{(}\int_{\theta_{\rm min}}^{\theta_{\rm
max}}\lambda(r,\theta)
v(1,\theta)\overline{v}(1,\theta)d\theta-(b(r)\nabla v,v)\Big{)}\\
\geq& c_0\|\nabla v\|^2+\delta\|v\|^2-c^2\|v\|\|v\|_1.
\end{split}
\end{equation*}
Thus, by choosing $\delta$ sufficiently large, it follows that
there exists positive constant $C$ such that
\begin{equation}\label{coer}
\begin{split}
{\rm Re}\mathcal{B}(r;v,v)\geq C\|v\|_1^2,
\end{split}
\end{equation}
uniformly, for any $r$ and any $v\in\HH$.
\begin{remark}
If $D$ is negative definite we may use
\begin{equation*}
\begin{split}
\mathcal{B}_n(r;v,w):=&-\Big{[}(D(r)\nabla v,\nabla
w)-\mathrm{i}\Big{(}\int_{\theta_{\rm min}}^{\theta_{\rm
max}}\lambda(r,\theta)
v(1,\theta)\overline{w}(1,\theta)d\theta-(b(r)\nabla v,w)\Big{)}\Big{]}\\
&+\delta(v,w),
\end{split}
\end{equation*}
with $\delta$ a sufficiently large positive constant. In this case
 \eqref{cont} and \eqref{coer} also hold since now $-D$ is positive
definite.
\end{remark}
\subsection{Projection estimates}
%
%
%
Let $R_h(r):\HH\rightarrow S_h$ be a projection operator defined by
\begin{equation}\label{prodef}
\mathcal{B}(r;R_h(r)v,\phi)=\mathcal{B}(r;v,\phi)\;\;\forall\phi\in
S_h.
\end{equation}
Obviously, since \eqref{cont} and \eqref{coer} hold true, then by
Lax-Milgram Theorem the projection is well defined.

Let us now define the operator
\begin{equation}\label{L*op}
\begin{split}
&\mathcal{L}^*(r)w:=-{\rm div}(D\nabla
w)+\mathrm{i}b\nabla w+\mathrm{i}[b_{1y}+b_{2\theta}]w+\delta w\;\;\mbox{in}\;\;\mathfrak{D},\\
&w=0\;\;\mbox{on}\;\;\partial\mathfrak{D}_D,\\
&\eta^t(D\nabla w)=\mathrm{i}\lambda^*
w\;\;\mbox{on}\;\;\partial\mathfrak{D}_R,
\end{split}
\end{equation}
with $\lambda^*$  a complex-valued function to be chosen appropriately in the
sequel. For $\phi\in\HH$ we get
\begin{equation}\label{wl^*op}
\begin{split}
(\mathcal{L}^*(r)w,\phi) &= (D\nabla w,\nabla\phi)-
\int_{\theta_{\rm min}}^{\theta_{\rm max}}\mathrm{i}\lambda^*
w(1)\overline{\phi}(1)d\theta\\
&\quad+\mathrm{i}(b\nabla
w,\phi)+\mathrm{i}([b_{1y}+b_{2\theta}]w,\phi)+\delta(w,\phi)\\
& = (D\nabla w,\nabla\phi)- \int_{\theta_{\rm min}}^{\theta_{\rm
max}}\mathrm{i}\lambda^*
w(1)\overline{\phi}(1)d\theta\\
&\quad -\mathrm{i}([b_{1y}+b_{2\theta}]w,\phi)
-\mathrm{i}(bw,\nabla\phi) +\mathrm{i}\int_{\theta_{\rm
min}}^{\theta_{\rm
max}}b_1(1)w(1)\overline{\phi}(1)d\theta\\
&\quad +\mathrm{i}([b_{1y}+b_{2\theta}]w,\phi)+\delta(w,\phi)\\
&= (D\nabla w,\nabla\phi)+\mathrm{i}\int_{\theta_{\rm
min}}^{\theta_{\rm max}}[b_1(1)-\lambda^*]
w(1)\overline{\phi}(1)d\theta-\mathrm{i}(bw,\nabla\phi)+\delta(w,\phi).
\end{split}
\end{equation}
Since $D$, $b$ and $\delta$ are real, then for any $\phi$ in $\HH$
it follows that
\begin{equation}\label{wl^*op1}
\begin{split}
\overline{(\mathcal{L}^*(r)w,\phi)} &= (D\nabla \phi,\nabla
w)-\mathrm{i}\int_{\theta_{\rm min}}^{\theta_{\rm
max}}[b_1(1)-\overline{\lambda^*}]
\phi(1)\overline{w}(1)d\theta\\
&\quad +\mathrm{i}(b\nabla \phi,w)+\delta(\phi,w).
\end{split}
\end{equation}
Setting
\begin{equation}\label{defl*}
\lambda^*:=b_1(1)-\overline{\lambda},
\end{equation}
we obtain
\begin{equation}\label{wl^*op2}
\begin{split}
\overline{(\mathcal{L}^*(r)w,\phi)}=\mathcal{B}(r;\phi,w),
\end{split}
\end{equation}
and thus
\begin{equation}\label{wl^*op3}
\begin{split}
(\mathcal{L}^*(r)w,\phi)=\overline{\mathcal{B}(r;\phi,w)},
\end{split}
\end{equation}
for any $\phi\in\HH$. Throughout the rest of this paper, we
consider $\lambda^*$ given by \eqref{defl*}.
\begin{remark}
We observe that in the case of the specific problem
\eqref{cvpro}, $b_1=y\frac{s_r}{s}$, $b_2=0$,
$\lambda=\frac{\mathrm{1}}{2}\Big{[}\frac{s_{r}}{s}+\mathrm{i}\frac{a}{r^2}\Big{(}\frac{s_{\theta}}{s}\Big{)}^2\Big{]}$
and thus
$$\lambda^*=b_1(1)-\overline{\lambda}=\frac{s_r}{s}-
\frac{\mathrm{1}}{2}\Big{[}\frac{s_{r}}{s}-\mathrm{i}\frac{a}{r^2}\Big{(}\frac{s_{\theta}}{s}\Big{)}^2\Big{]}=\lambda.$$
\end{remark}
\begin{proposition}\label{prop1}
There exists a positive constant $c$ such that if $v\in \HH\cap
H^{s}(\mathfrak{D})$ then
\begin{equation}\label{H1e}
\begin{split}
\|R_h(r)v-v\|_1\leq ch^{\tau}\|v\|_{\tau+1},
\end{split}
\end{equation}
and
\begin{equation}\label{L2e}
\begin{split}
\|R_h(r)v-v\|\leq ch^{\tau+1}\|v\|_{\tau+1}.
\end{split}
\end{equation}
\end{proposition}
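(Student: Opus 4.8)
The plan is to prove the two estimates \eqref{H1e} and \eqref{L2e} by the standard Galerkin-projection strategy: first establish the $H^1$ (energy) estimate directly from coercivity and continuity together with the approximation property \eqref{2.35n}, and then obtain the $L^2$ estimate by an Aubin--Nitsche duality argument using the adjoint operator $\mathcal{L}^*(r)$ already introduced in \eqref{L*op}--\eqref{wl^*op3}.

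For the $H^1$ estimate, I would start from Galerkin orthogonality: by the definition \eqref{prodef} of $R_h(r)$ we have $\mathcal{B}(r;R_h(r)v-v,\phi)=0$ for all $\phi\in S_h$. Writing $e:=R_h(r)v-v$ and inserting an arbitrary $\chi\in S_h$, I would use coercivity \eqref{coer} to bound
$C\|R_h(r)v-\chi\|_1^2\leq {\rm Re}\,\mathcal{B}(r;R_h(r)v-\chi,R_h(r)v-\chi)=\mathrm{Re}\,\mathcal{B}(r;v-\chi,R_h(r)v-\chi)$,
and then control the right-hand side by continuity \eqref{cont} as $c\|v-\chi\|_1\|R_h(r)v-\chi\|_1$. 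Dividing gives $\|R_h(r)v-\chi\|_1\leq (c/C)\|v-\chi\|_1$, and the triangle inequality yields $\|R_h(r)v-v\|_1\leq (1+c/C)\inf_{\chi\in S_h}\|v-\chi\|_1$. Taking $s=\tau$ in the approximation property \eqref{2.35n} then produces $\|R_h(r)v-v\|_1\leq c\,h^{\tau}\|v\|_{\tau+1}$, which is \eqref{H1e}. (I note the hypothesis should read $v\in\HH\cap H^{\tau+1}(\mathfrak{D})$ for the right-hand side to make sense.)

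For the $L^2$ estimate I would run duality. Let $e=R_h(r)v-v$ and let $w\in\HH$ solve the adjoint problem $\mathcal{L}^*(r)w=e$ with the boundary conditions in \eqref{L*op}; by the identity \eqref{wl^*op3}, testing against $e$ gives $\|e\|^2=(e,\mathcal{L}^*(r)w)=\overline{(\mathcal{L}^*(r)w,e)}=\mathcal{B}(r;e,w)$. Using Galerkin orthogonality to subtract an arbitrary $\psi\in S_h$, I get $\|e\|^2=\mathcal{B}(r;e,w-\psi)$, which by continuity \eqref{cont} is bounded by $c\|e\|_1\,\|w-\psi\|_1$. Choosing $\psi$ to be the best $S_h$-approximant of $w$ and applying \eqref{2.35n} with $s=0$ gives $\inf_\psi\|w-\psi\|_1\leq c\,h\,\|w\|_2$. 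Combined with the already-proved $\|e\|_1\leq c\,h^{\tau}\|v\|_{\tau+1}$, this yields $\|e\|^2\leq c\,h^{\tau+1}\|v\|_{\tau+1}\|w\|_2$.

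The main obstacle, and the step I would flag as nonroutine here, is supplying the elliptic regularity estimate $\|w\|_2\leq c\|e\|$ for the adjoint problem $\mathcal{L}^*(r)w=e$. Because this is a complex elliptic operator on the rectangle $\mathfrak{D}$ with mixed homogeneous Dirichlet conditions on $\partial\mathfrak{D}_D$ and an inhomogeneous-type Robin condition on $\partial\mathfrak{D}_R$, the needed $H^2$-regularity is exactly the content of the Elliptic Regularity Theorem proved later (Theorem \ref{3.2.10}); the choice $\lambda^*=b_1(1)-\overline{\lambda}$ in \eqref{defl*} is what makes the Robin condition of $\mathcal{L}^*(r)$ fall into the admissible class covered there, so I would invoke that theorem to conclude $\|w\|_2\leq c\|e\|$. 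Substituting this bound into the duality chain and dividing by $\|e\|$ gives $\|e\|\leq c\,h^{\tau+1}\|v\|_{\tau+1}$, which is \eqref{L2e}, with all constants uniform in $r$ thanks to the $r$-uniform continuity and coercivity constants in \eqref{cont} and \eqref{coer}.
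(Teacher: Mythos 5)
Your proposal is correct and follows essentially the same route as the paper: a C\'ea-type energy argument from coercivity \eqref{coer}, continuity \eqref{cont} and Galerkin orthogonality for the $H^1$ bound, followed by an Aubin--Nitsche duality argument with the adjoint problem $\mathcal{L}^*(r)w=e$, the identity \eqref{wl^*op3}, and the elliptic regularity of Theorem \ref{3.2.10} for the $L^2$ bound. The only nit is that the approximation property \eqref{2.35n} should be invoked with $s=1$ (not $s=0$) to obtain $\inf_{\psi\in S_h}\|w-\psi\|_1\leq c\,h\,\|w\|_2$; otherwise the argument matches the paper's proof.
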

\begin{proof}
We set $e:=R_h(r)v-v$, use \eqref{coer}, \eqref{cont} and
\eqref{2.35n} to obtain for $\phi\in S_h$
\begin{equation*}
\begin{split}
c\|e\|_1^2\leq {\rm Re}\mathcal{B}(r;e,e)&={\rm
Re}\mathcal{B}(r;e,R_h(r)v-v)={\rm Re}\mathcal{B}(r;e,\phi-v)\\
&\leq c\|e\|_1\displaystyle{\inf_{\phi\in S_h}}\|\phi-v\|_1\leq
c\|e\|_1h^{\tau}\|v\|_{\tau+1},
\end{split}
\end{equation*}
which establishes \eqref{H1e}.

Let now $w$ be the solution of the problem: $\mathcal{L}^*(r)w=e$.
Then by using \eqref{wl^*op3}, the approximation property and
elliptic regularity, proved in Theorem \ref{3.2.10}, we get for
$\phi\in S_h$:
\begin{equation*}
\begin{split}
\|e\|^2=(\mathcal{L}^*(r)w,e)&=\overline{\mathcal{B}(r;e,w)}=\overline{\mathcal{B}(r;e,w-\phi)}\\
&\leq c\|e\|_1\displaystyle{\inf_{\phi\in S_h}}\|w-\phi\|_1\leq
ch^{\tau}\|v\|_{\tau+1}h\|w\|_2\leq
ch^{\tau+1}\|v\|_{\tau+1}\|e\|,
\end{split}
\end{equation*}
which yields \eqref{L2e}.
\end{proof}
\begin{proposition}\label{prop3}
Let $r\in C^1([r_{\rm min},r_{\rm max}],H^{{\rm
r}}(\mathfrak{D}))$. Then it holds that
\begin{equation}\label{H1re}
\left\|\partial_r\left(R_h(r)v(r)-v(r)\right)\right\|_1\leq\,
C\,h^{\tau}\,\left(\,\|v\|_{\tau+1}+\|\partial_r v\|_{\tau+1}\,\right).
\end{equation}
\end{proposition}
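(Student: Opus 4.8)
The plan is to differentiate the defining relation \eqref{prodef} with respect to $r$ and reduce the estimate for $\partial_r e$, where $e:=R_h(r)v(r)-v(r)$, to the already-established $H^1$-projection bound \eqref{H1e}. Write $w(r):=R_h(r)v(r)\in S_h$ and let $\mathcal{B}'(r;\cdot,\cdot)$ denote the sesquilinear form obtained from $\mathcal{B}$ by differentiating its coefficients $D$, $b$, $\lambda$ with respect to $r$ (the boundary term thereby producing an integral containing $\partial_r\lambda$). Since $S_h$ is a fixed, $r$-independent subspace, every $\phi\in S_h$ may be treated as constant in $r$; differentiating $\mathcal{B}(r;w(r),\phi)=\mathcal{B}(r;v(r),\phi)$ then gives, by the product rule,
\[
\mathcal{B}'(r;w,\phi)+\mathcal{B}(r;\partial_r w,\phi)=\mathcal{B}'(r;v,\phi)+\mathcal{B}(r;\partial_r v,\phi),\qquad\forall\,\phi\in S_h.
\]

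Next I would introduce the auxiliary element $\psi:=\partial_r w-R_h(r)(\partial_r v)\in S_h$, the point being that $\partial_r w=\partial_r\bigl(R_h(r)v(r)\bigr)$ lies in $S_h$ because $S_h$ is finite-dimensional and $r$-independent. Replacing $\mathcal{B}(r;\partial_r v,\phi)$ by $\mathcal{B}(r;R_h(r)(\partial_r v),\phi)$, which is legitimate by applying \eqref{prodef} to $\partial_r v$, and rearranging yields the key identity
\[
\mathcal{B}(r;\psi,\phi)=\mathcal{B}'(r;v-w,\phi)=-\mathcal{B}'(r;e,\phi),\qquad\forall\,\phi\in S_h.
\]
Choosing $\phi=\psi$ and invoking the coercivity \eqref{coer} together with a continuity bound for $\mathcal{B}'$ of the same type as \eqref{cont}, I obtain $C\|\psi\|_1^2\le|\mathcal{B}'(r;e,\psi)|\le c\|e\|_1\|\psi\|_1$, hence $\|\psi\|_1\le c\|e\|_1\le ch^{\tau}\|v\|_{\tau+1}$ by \eqref{H1e}.

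To conclude I would split $\partial_r e=\partial_r w-\partial_r v=\psi+\bigl(R_h(r)(\partial_r v)-\partial_r v\bigr)$ and bound the two pieces separately: the first by the estimate for $\psi$ just obtained, and the second by applying the $H^1$-projection estimate \eqref{H1e} to the function $\partial_r v$, which gives $\|R_h(r)(\partial_r v)-\partial_r v\|_1\le ch^{\tau}\|\partial_r v\|_{\tau+1}$. Adding the two bounds produces exactly \eqref{H1re}.

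The main obstacle is to make the continuity estimate for $\mathcal{B}'$ rigorous. This requires that the $r$-derivatives of the coefficients $D$, $b$ and $\lambda$ be bounded, and in particular the differentiated boundary term $-\mathrm{i}\int_{\theta_{\rm min}}^{\theta_{\rm max}}\partial_r\lambda\,v(1,\theta)\overline{\phi}(1,\theta)\,d\theta$ must be controlled by the trace inequality $\|v(1,\cdot)\|\le c\|v\|_1$, exactly as in the derivation of \eqref{cont} and \eqref{coer}. A preliminary point, which I would state but not belabor, is that $r\mapsto R_h(r)v(r)$ is indeed $C^1$: for each $r$ it solves a linear system whose matrix and right-hand side depend in a $C^1$ fashion on $r$ (through the coefficients of $\mathcal{B}$ and through $v\in C^1$), so differentiability follows by Cramer's rule, thereby justifying all the manipulations above.
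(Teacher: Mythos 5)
Your proof is correct, but it organizes the argument differently from the paper. Both proofs begin by differentiating the Galerkin orthogonality $\mathcal{B}(r;R_h(r)v(r)-v(r),\phi)=0$ in $r$, and both ultimately rest on coercivity \eqref{coer}, continuity of the differentiated form $\dot{\mathcal{B}}$ (which, as you note, needs bounded $r$-derivatives of $D$, $b$, $\lambda$ plus the trace inequality for the boundary term), and the $H^1$ bound \eqref{H1e}. The divergence is in how the non-discrete part of $\dot e$ is handled. The paper estimates $\|\dot e\|_1$ directly: it tests coercivity with $\dot e$ itself, writes $\mathcal{B}(r;\dot e,\dot e)=\mathcal{B}(r;\dot e,\dot e+\phi)+\dot{\mathcal{B}}(r;e,\phi)$ for arbitrary $\phi\in S_h$, and runs a C\'ea-type argument in which $\inf_{\phi\in S_h}\|\partial_r v-\phi\|_1$ appears (using that $\partial_r(R_h v)\in S_h$ can be absorbed into $\phi$), closing with the approximation property \eqref{2.35n}. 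You instead isolate the commutator $\psi=\partial_r\bigl(R_h(r)v\bigr)-R_h(r)(\partial_r v)\in S_h$, for which the identity $\mathcal{B}(r;\psi,\phi)=-\dot{\mathcal{B}}(r;e,\phi)$ lets you test with $\phi=\psi$ and get $\|\psi\|_1\le c\|e\|_1$ cleanly inside $S_h$, and then you invoke Proposition \ref{prop1} a second time, applied to $\partial_r v$ (which does lie in $\HH$, since differentiating $v|_{\partial\mathfrak{D}_D}=0$ in $r$ preserves the Dirichlet trace). Your version buys a tidier structure — no quadratic inequality in $\|\dot e\|_1$ to untangle, and the commutator bound $\|\partial_r R_h v-R_h\partial_r v\|_1\le c\|e\|_1$ is a reusable statement — at the cost of one extra appeal to \eqref{prodef} for $\partial_r v$; the paper's version is more self-contained in that it only uses the approximation property rather than a second instance of the projection estimate. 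Your closing remarks on the $C^1$ dependence of $R_h(r)v(r)$ on $r$ address a point the paper passes over silently, and are welcome.
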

\begin{proof}
We set $e:=R_h(r)v(r)-v(r)$. Let $v:[r_{\rm min},r_{\rm
max}]\rightarrow H^{{\rm r}}(\mathfrak{D})$ and
$e(r)=R_h(r)v(r)-v(r)$ for $r\in[r_{\rm min},r_{\rm max}]$. Then,
we have
\begin{equation*}
\mathcal{B}(r;e(r),\phi)=0\;\;\forall\phi\in S_h.
\end{equation*}
Differentiating the above relation with respect to $r$ we obtain
\begin{equation*}
\mathcal{B}(r;\dot{e}(r),\phi)+\dot{\mathcal{B}}(r;e(r),\phi)=0\;\;\forall\phi\in
S_h.
\end{equation*}
Now,  for $\phi\in S_h$ we have
\begin{equation*}
\begin{split}
c\|\dot{e}(r)\|_1^2&\leq
\mathcal{B}(r;\dot{e}(r),\dot{e}(r))=\mathcal{B}(r;\dot{e}(r),\dot{e}(r)+\phi)
-\mathcal{B}(r;\dot{e}(r),\phi)\\
&=\mathcal{B}(r;\dot{e}(r),\dot{e}(r)+\phi)
+\mathcal{B}(r;e(r),\phi)\\
&\leq
c\Big{[}\|\dot{e}(r)\|_1\|\dot{e}(r)+\phi\|_1+\|e(r)\|_1\|\phi\|_1\Big{]}\\
&\leq
c\Big{[}\|\dot{e}(r)\|_1\|\dot{e}(r)+\phi\|_1+\|e(r)\|_1(\|\dot{e}(r)+\phi\|_1+\|\dot{e}(r)\|_1)\Big{]}\\
&=c\Big{[}(\|\dot{e}(r)\|_1+\|e(r)\|_1)\|\dot{e}(r)+\phi\|_1+\|e(r)\|_1\|\dot{e}(r)\|_1\Big{]}\\
&\leq
c\Big{[}(\|\dot{e}(r)\|_1+\|e(r)\|_1)\displaystyle{\inf_{\phi\in
S_h}} \|\partial_r(R_hv)(r)-\partial_r
v+\phi\|_1+\|e(r)\|_1\|\dot{e}(r)\|_1\Big{]}\\
&\leq
c\Big{[}(\|\dot{e}(r)\|_1+\|e(r)\|_1)\displaystyle{\inf_{\phi\in
S_h}} \|\partial_r
v-\phi\|_1+\|e(r)\|_1\|\dot{e}(r)\|_1\Big{]}\\
&= c\|\dot{e}(r)\|_1\Big{[}\|e(r)\|_1+\displaystyle{\inf_{\phi\in
S_h}} \|\partial_r
v-\phi\|_1\Big{]}+c\|e(r)\|_1\displaystyle{\inf_{\phi\in S_h}}
\|\partial_r v-\phi\|_1.
\end{split}
\end{equation*}
The claim of the proposition follows by using the
approximation property \ref{2.35n} with $s=\tau+1$.
\end{proof}
Using a technique introduced in \cite{Dupont1973}, we are able
to show the following optimal order approximation result for
the time-derivative of the elliptic projection.
\begin{proposition}\label{prop4}
There exists a positive constant $c$ such that
\begin{equation}\label{L2re}
\left\|\partial_r\left(R_h(r)v(r)-v(r)\right)\right\|\leq
C\,h^{\tau+1}\,\left(\,\|v\|_{\tau+1}+\|\partial_r v\|_{\tau+1}\,\right).
\end{equation}
\end{proposition}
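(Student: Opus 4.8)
The plan is to estimate $\dot e:=\partial_r\big(R_h(r)v(r)-v(r)\big)$ in $L^2$ by an Aubin--Nitsche duality against the adjoint operator $\mathcal{L}^*(r)$, following the device of \cite{Dupont1973} that copes with the $r$-dependence of the form, and then to isolate and treat separately the trace contribution on $\partial\mathfrak{D}_R$, which is where the genuine difficulty lies. First I would differentiate the defining relation \eqref{prodef}: subtracting gives $\mathcal{B}(r;e(r),\phi)=0$ for all $\phi\in S_h$, and differentiating in $r$ yields $\mathcal{B}(r;\dot e,\phi)=-\dot{\mathcal{B}}(r;e,\phi)$ for all $\phi\in S_h$. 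Next, let $w\in\HH$ solve the adjoint problem $\mathcal{L}^*(r)w=\dot e$; by the elliptic regularity of Theorem \ref{3.2.10} we have $\|w\|_2\le C\|\dot e\|$. Using \eqref{wl^*op3} with $\phi=\dot e$ (note $\dot e\in\HH$) I would write $\|\dot e\|^2=(\mathcal{L}^*(r)w,\dot e)=\overline{\mathcal{B}(r;\dot e,w)}$, so that everything reduces to bounding $\mathcal{B}(r;\dot e,w)$.

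Inserting an approximation $\chi\in S_h$ to $w$ and invoking the differentiated orthogonality, I would split
\begin{equation*}
\mathcal{B}(r;\dot e,w)=\mathcal{B}(r;\dot e,w-\chi)+\dot{\mathcal{B}}(r;e,w-\chi)-\dot{\mathcal{B}}(r;e,w).
\end{equation*}
The first two terms are routine: by the continuity of $\mathcal{B}$ and of $\dot{\mathcal{B}}$ (whose coefficients are the bounded $r$-derivatives of $D,b,\lambda$), by the approximation property \eqref{2.35n} giving $\|w-\chi\|_1\le Ch\|w\|_2$, by Propositions \ref{prop1} and \ref{prop3} which control $\|e\|_1$ and $\|\dot e\|_1$ at order $h^{\tau}$, and by $\|w\|_2\le C\|\dot e\|$, each is bounded by $Ch^{\tau+1}\big(\|v\|_{\tau+1}+\|\partial_r v\|_{\tau+1}\big)\|\dot e\|$. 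The whole estimate therefore hinges on the remaining term $\dot{\mathcal{B}}(r;e,w)$.

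For $\dot{\mathcal{B}}(r;e,w)$ the naive bound $\le C\|e\|_1\|w\|_1$ only yields order $h^{\tau}$, so this is where the refinement enters. Since $\dot{\mathcal{B}}(r;e,w)=(\dot D\nabla e,\nabla w)+\mathrm{i}(\dot b\nabla e,w)-\mathrm{i}\int_{y=1}\dot\lambda\,e(1)\overline w(1)\,d\theta$, I would integrate the two volume terms by parts so as to move all derivatives off $e$ (legitimate since $w\in H^2$). Because $e$ vanishes on $\partial\mathfrak{D}_D$, the only surviving boundary contributions live on $\partial\mathfrak{D}_R=\{y=1\}$, while the interior integrals are bounded by $C\|e\|\|w\|_2\le Ch^{\tau+1}\|v\|_{\tau+1}\|\dot e\|$ via \eqref{L2e}. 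Collecting the surviving boundary terms leaves a single trace pairing $<e,G>=\int_{y=1}e(1)\overline G(1)\,d\theta$, where $G$ is a fixed combination of $\eta^t\dot D\nabla w$ and $w$ produced by $\dot D$, $\dot b_1$ and $\dot\lambda$; crucially $G$ is the trace of an $H^1(\mathfrak{D})$ function with $\|G\|_1\le C\|w\|_2\le C\|\dot e\|$.

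The hard part is to show that this trace pairing is of order $h^{\tau+1}$ and not merely $h^{\tau+\frac12}$, the latter being all that the naive trace inequality $\|e(1,\cdot)\|_{L^2(\partial\mathfrak{D}_R)}\le C\|e\|^{1/2}\|e\|_1^{1/2}$ provides. I would recover the missing half power through the negative norm of Section 2: by definition $|<e,G>|\le|e|_{-\frac12,\partial\mathfrak{D}_R}\,\|G\|_1$, so it suffices to prove $|e|_{-\frac12,\partial\mathfrak{D}_R}\le Ch^{\tau+1}\|v\|_{\tau+1}$. For this I would run an auxiliary boundary duality: given $\tilde v\in H^1(\mathfrak{D})$, let $z$ solve the elliptic problem associated with $\mathcal{B}(r;\cdot,\cdot)$ carrying the inhomogeneous Robin datum $\tilde v|_{y=1}$ on $\partial\mathfrak{D}_R$, so that $\mathcal{B}(r;e,z)=<e,\tilde v>$; then Galerkin orthogonality gives $\mathcal{B}(r;e,z)=\mathcal{B}(r;e,z-\chi')$ for any $\chi'\in S_h$, and continuity together with a boundary-data regularity estimate $\|z\|_2\le C\|\tilde v\|_1$ and \eqref{H1e} yields $|<e,\tilde v>|\le Ch^{\tau+1}\|v\|_{\tau+1}\|\tilde v\|_1$; taking the supremum over $\tilde v$ gives the negative-norm bound. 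Combining, $|\dot{\mathcal{B}}(r;e,w)|\le Ch^{\tau+1}\|v\|_{\tau+1}\|\dot e\|$, and dividing $\|\dot e\|^2\le Ch^{\tau+1}\big(\|v\|_{\tau+1}+\|\partial_r v\|_{\tau+1}\big)\|\dot e\|$ by $\|\dot e\|$ gives \eqref{L2re}. I expect the main obstacle to be exactly this last boundary analysis — securing the correct elliptic regularity for the inhomogeneous-Robin auxiliary problem and verifying that $G$ genuinely extends to an $H^1$ function so that the negative-norm pairing is legitimate.
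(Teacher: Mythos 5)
Your proposal is correct and follows essentially the same route as the paper: duality against $\mathcal{L}^*(r)w=\dot e$, the Dupont-type splitting $\mathcal{B}(r;\dot e,w)=\mathcal{B}(r;\dot e,w-\chi)+\dot{\mathcal{B}}(r;e,w-\chi)-\dot{\mathcal{B}}(r;e,w)$, integration by parts in $\dot{\mathcal{B}}(r;e,w)$ to trade $\|e\|_1\|w\|_1$ for $\|e\|\,\|w\|_2$ plus trace terms on $\partial\mathfrak{D}_R$, and the auxiliary inhomogeneous-Robin duality giving $|e|_{-\frac12,\partial\mathfrak{D}_R}\le Ch^{\tau+1}\|v\|_{\tau+1}$. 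The paper's treatment of the boundary terms (its $I_1,I_2,I_3$) and its use of Theorem \ref{3.2.10} with Remark \ref{gterm2} match your outline step for step.
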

%
%
\begin{proof}
We set $e:=R_h(r)v(r)-v(r)$. Let $w$ be the solution of the
problem: $\mathcal{L}^*w=\dot{e}$. For $\chi\in S_h$ we have
\begin{equation*}
\begin{split}
\|\dot{e}(r)\|^2&=(\mathcal{L}^*w,\dot{e}(r))=\overline{\mathcal{B}(r;\dot{e}(r),w)}\\
&={\rm
Re}\Big{[}\overline{\mathcal{B}(r;\dot{e}(r),w-\chi)}-\overline{\dot{\mathcal{B}}(r;e,\chi)}\Big{]}\\
&\leq
c\Big{[}\|\dot{e}(r)\|_1\|w-\chi\|_1+\|e(r)\|_1\|w-\chi\|_1\Big{]}-{\rm
Re}\Big{[}\dot{\mathcal{B}}(r;e,w)\Big{]}\\
&\leq
c\Big{(}\|\dot{e}(r)\|_1+\|e(r)\|\Big{)}\displaystyle{\inf_{\chi\in
S_h}}\|w-\chi\|_1-{\rm
Re}\Big{[}\dot{\mathcal{B}}(r;e,w)\Big{]}\\
&\leq ch^{\rm r} \Big{(}\|v\|_{\rm r+1}+\|\partial_r v \|_{\rm
r+1} \Big{)}h\|w\|_2-{\rm
Re}\Big{[}\dot{\mathcal{B}}(r;e,w)\Big{]}.
\end{split}
\end{equation*}
For  convenience we set $I:={\rm
Re}\Big{[}\dot{\mathcal{B}}(r;e,w)\Big{]}$. First,  observe that
\begin{equation*}
\dot{\mathcal{B}}(r;e,w)=(\partial_r D\nabla e,\nabla
w)-\mathrm{i}\Big{(}\int_{\theta_{\rm min}}^{\theta_{\rm
max}}\partial_r\lambda(r,\theta)
e(r,1,\theta)\overline{w}(r,1,\theta)d\theta-(\partial_rb \nabla
e,w)\Big{)},
\end{equation*}
so that
\begin{equation*}
I={\rm Re}\Big{[}(\partial_r D\nabla e,\nabla
w)-\mathrm{i}\Big{(}\int_{\theta_{\rm min}}^{\theta_{\rm
max}}\partial_r\lambda(r,\theta)
e(r,1,\theta)\overline{w}(r,1,\theta)d\theta-(\partial_rb \nabla
e,w)\Big{)}\Big{]}.
\end{equation*}
By the definition of  the inner product $<u,v>$
we have
\begin{equation*}
\begin{split}
I_1:={\rm Re}\Big{[}\mathrm{i}(\partial_rb \nabla e,w)\Big{]}={\rm
Re}\Big{[}&-\mathrm{i}(\partial_{r}[b_{1y}+b_{2\theta}]e,w)-\mathrm{i}(\partial_rb
e,\nabla w)\\
& +\mathrm{i}\int_{\theta_{\rm min}}^{\theta_{\rm
max}}(\partial_rb_1)(1)e(1)\overline{w}(1)d\theta\Big{]}\\
&\leq c\|e\|\|w\|_1+{\rm Re}\Big{[}\mathrm{i}<\partial_rb_1
e,w>\Big{]}.
\end{split}
\end{equation*}
We set $I_2:={\rm Re}\Big{[}-\mathrm{i}\int_{\theta_{\rm
min}}^{\theta_{\rm max}}\partial_r\lambda
e\overline{w}d\theta\Big{]}$. Using the estimates above we obtain
\begin{equation*}
\begin{split}
I_1+I_2&\leq {\rm
Re}\Big{[}-\mathrm{i}<[\partial_r\lambda-\partial_rb_1]e,w>\Big{]}+
c\|e\|\|w\|_1\\
&\leq c|e|_{-\frac{1}{2},\partial
\mathfrak{D}_R}\|w\|_1+c\|e\|\|w\|_1\leq c
\Big{[}\|e\|+|e|_{-\frac{1}{2},\partial
\mathfrak{D}_R}\Big{]}\|w\|_1.
\end{split}
\end{equation*}
In addition,
\begin{equation*}
\begin{split}
I_3&:={\rm Re}\Big{[}(\partial_r D\nabla e,\nabla w)\Big{]}={\rm
Re}\Big{[}\int_{\theta_{\rm min}}^{\theta_{\rm
max}}\eta(\partial_rD\nabla w\overline{e})d\theta-({\rm
div}(\partial_rD\nabla w),e)\Big{]}\\
&\leq c\|w\|_2\|e\|+{\rm
Re}\Big{[}<\eta(\partial_rD\nabla w),e>\Big{]}\\
&\leq c\|w\|_2\|e\|+|e|_{-\frac{1}{2},\partial
\mathfrak{D}_R}\|w\|_2\leq
c\Big{[}\|e\|+|e|_{-\frac{1}{2},\partial
\mathfrak{D}_R}\Big{]}\|w\|_2,
\end{split}
\end{equation*}
so that
\begin{equation}\label{Iest}
I\leq c\Big{[}\|e\|+|e|_{-\frac{1}{2},\partial
\mathfrak{D}_R}\Big{]}\|w\|_2.
\end{equation}

Now, for $g\in H^{\frac{1}{2}}(\partial \mathfrak{D}_R)$ we
consider the elliptic problem
\begin{equation*}
\begin{split}
&-{\rm div}(D\nabla
z)+\mathrm{i}b\nabla z+\mathrm{i}[b_{1y}+b_{2\theta}]z+\delta z=0\;\;\mbox{in}\;\;\mathfrak{D},\\
&z=0\;\;\mbox{on}\;\;\partial\mathfrak{D}_D,\\
&\eta^t(D\nabla z)=\mathrm{i}\lambda^*
z+g\;\;\mbox{on}\;\;\partial\mathfrak{D}_R.
\end{split}
\end{equation*}
Then we have $0=\overline{\mathcal{B}(r;e(r),z)}-<g,e>$ and thus,
\begin{equation*}
\begin{split}
<g,e>=\overline{\mathcal{B}(r;e(r),z)}=\overline{\mathcal{B}(r;e(r),z-\phi)}\;\;\forall\phi\in
S_h.
\end{split}
\end{equation*}
It follows then that
\begin{equation*}
\begin{split}
|<e,g>|\leq c\|e\|_1\displaystyle{\inf_{\phi\in S_h}}\|z-\phi\|_1,
\end{split}
\end{equation*}
and therefore,
$$|<e,g>|\leq ch^{\tau}\|v\|_{\tau+1}h\|z\|_2.$$
The elliptic regularity result (cf. Theorem \ref{3.2.10} and
Remark \ref{gterm2}) for the solution $z$ of the elliptic problem
above, reads
$$\|z\|_2\leq c|g|_{\frac{1}{2},\partial\mathfrak{D}_R}.$$
Thus we have
\begin{equation*}
|e|_{-\frac{1}{2},\partial
\mathfrak{D}_R}:=\displaystyle{\sup_{\tilde{v}\in
H^1(\mathfrak{D}),\tilde{v}\neq
0}}\frac{|<e,\tilde{v}>|}{\|\tilde{v}\|_1}\leq
ch^{\tau+1}\|v\|_{\tau+1},
\end{equation*}
and subsequently, using the elliptic regularity of $w$, cf. again
Theorem \ref{3.2.10}, we arrive at
\begin{equation*}
\begin{split}
\|\dot{e}(r)\|^2&\leq
ch^{\tau+1}\Big{(}\|v\|_{\tau+1}+\|\partial_r
v\|_{\tau+1}\Big{)}\|w\|_2+c\Big{[}\|e\|+|e|_{-1/2,\partial
\mathfrak{D}_R}\Big{]}\|w\|_2\\
&\leq c\|w\|_2h^{\tau+1}\Big{(}\|v\|_{\tau+1}+\|\partial_r
v\|_{\tau+1}\Big{)}\leq
c\|\dot{e}(r)\|h^{\tau+1}\Big{(}\|v\|_{\tau+1}+\|\partial_r
v\|_{\tau+1}\Big{)},
\end{split}
\end{equation*}
which completes the proof  of the proposition.
\end{proof}
\section{A Crank--Nikolson-type fully discrete scheme}
\subsection{Weak Formulation}
Let $\phi\in \HH$. Multiplying the partial differential equation
of \eqref{divgen} by $\bar{\phi}$ and integrating by parts
we have
\begin{equation}\label{3.70}
\begin{split}
&\Big{(}u_r(r),\phi\Big{)}= \mathrm{i}\Big{[}-\Big{(}D(r)\nabla
u(r),\nabla\phi\Big{)}+\int_{\partial\mathfrak{D}}\eta^t
D(r)\nabla u(r)\overline{\phi}ds\Big{]}
+\Big{(}b(r)\nabla u(r),\phi\Big{)}\\
&+\mathrm{i}\Big{(}\beta(r) u(r),\phi\Big{)}+(F(r),\phi)\\
&=-\mathrm{i}\Big{[}\Big{(}D(r)\nabla
u(r),\nabla\phi\Big{)}-\mathrm{i}\Big{\{}\int_{\theta_{\rm
min}}^{\theta_{\rm max}}\lambda(r,\theta)
u(r,1,\theta)\overline{\phi}(1,\theta)d\theta-\Big{(}b(r)\nabla u(r),\phi\Big{)}\Big{\}}\Big{]}\\
&+\mathrm{i}\Big{(}\beta(r) u(r),\phi\Big{)}+(F(r),\phi)\\
&=-\mathrm{i}\mathcal{B}\Big{(}r;u(r),\phi\Big{)}+\mathrm{i}\Big{(}(\beta(r)+\delta)
u(r),\phi\Big{)}+(F(r),\phi),
\end{split}
\end{equation}
for any $\phi\in\HH$. In the following theorem we prove that
\eqref{3.70} defines  $u$ in $\HH$ uniquely.
%
\begin{theorem}\label{3.3.1}
The weak problem \eqref{3.70} has at most one solution in $\HH$.
\end{theorem}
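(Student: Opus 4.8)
The plan is to prove uniqueness by a standard energy argument. Since the weak problem \eqref{3.70} is linear in $u$, it suffices to show that if $u$ is a solution with zero data (that is, $F\equiv 0$ and zero initial condition $u(r_{\rm min},\cdot)=0$), then $u\equiv 0$ on $[r_{\rm min},r_{\rm max}]\times\mathfrak{D}$. The difference of any two solutions solves precisely this homogeneous problem, so this reduction is immediate.

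First I would take $\phi=u(r)$ in the homogeneous version of \eqref{3.70}, giving
\begin{equation*}
\big(u_r(r),u(r)\big)=-\mathrm{i}\,\mathcal{B}\big(r;u(r),u(r)\big)+\mathrm{i}\big((\beta(r)+\delta)u(r),u(r)\big).
\end{equation*}
The key observation is that $\tfrac{d}{dr}\|u(r)\|^2=2\,{\rm Re}\,(u_r(r),u(r))$, so I would take real parts of the identity above. The term $\mathrm{i}((\beta+\delta)u,u)$ contributes $-{\rm Im}(\beta+\delta)\|u\|^2$ after taking the real part of an imaginary multiple, which is controlled by $c\|u\|^2$ since $\beta$ is bounded and $\delta$ constant. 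For the sesquilinear form, the crucial point is that ${\rm Re}\big({-}\mathrm{i}\,\mathcal{B}(r;u,u)\big)={\rm Im}\,\mathcal{B}(r;u,u)$, and I expect this quantity to be bounded above by $c\|u\|^2$ as well; this is exactly where the $L^2$-stability structure built into the form enters, and where conditions \eqref{con1} and \eqref{con2} do their work on the boundary integral and the $(b\nabla u,u)$ term.

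The main obstacle, and the step requiring care, is estimating ${\rm Im}\,\mathcal{B}(r;u,u)$ from above by a multiple of $\|u\|^2$ alone, without any $\|\nabla u\|^2$ or boundary-norm contribution that cannot be absorbed. The real, symmetric part $(D\nabla u,\nabla u)+\delta\|u\|^2$ is real and thus drops out when taking the imaginary part; what survives is the imaginary part of $-\mathrm{i}\big[\int_{\theta_{\rm min}}^{\theta_{\rm max}}\lambda\,|u(1,\theta)|^2\,d\theta-(b\nabla u,u)\big]$, i.e. minus the real part of the bracket. Here I would integrate the convection term by parts in $\mathfrak{D}$, producing a boundary trace on $\partial\mathfrak{D}_R$ together with a bulk term involving $b_{1y}+b_{2\theta}$; combining the boundary contributions of $b_1$ and $\lambda$ and invoking \eqref{con2} (that $b_1(r,1,\theta)-2\,{\rm Re}\,\lambda(r,\theta)\le 0$) makes the surviving boundary integral nonpositive, so it can be discarded, while the remaining volume terms are bounded by $c\|u\|^2$ using \eqref{con1}.

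Having established the differential inequality $\tfrac{d}{dr}\|u(r)\|^2\le c\,\|u(r)\|^2$, I would finish with Gr\"onwall's inequality: since $\|u(r_{\rm min})\|=0$, it follows that $\|u(r)\|=0$ for all $r\in[r_{\rm min},r_{\rm max}]$, hence $u\equiv 0$ and the weak problem has at most one solution. I should note that the computation reducing ${\rm Im}\,\mathcal{B}(r;u,u)$ to a nonpositive boundary term plus a lower-order volume term mirrors the algebra already carried out in \eqref{wl^*op} when the adjoint operator $\mathcal{L}^*$ was derived, so the integration-by-parts bookkeeping can be imported from there rather than repeated in full.
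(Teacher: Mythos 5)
Your proposal is correct and follows essentially the same route as the paper: test with $\phi=u$, take real parts, integrate the convection term by parts so that condition \eqref{con2} renders the surviving boundary integral at $y=1$ nonpositive, bound the remaining volume terms by $c\|u\|^2$ using \eqref{con1}, and conclude by Gr\"onwall. The only cosmetic difference is that you reduce to the homogeneous problem for the difference of two solutions, whereas the paper derives the stability estimate \eqref{3.72} for general data and reads off uniqueness from it.
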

\begin{proof}
Let $u\in\HH$ be a solution of \eqref{3.70}. We set $\phi=u$ in
\eqref{3.70}, integrate by parts, use the facts that $D$ is a
real, symmetric matrix, that $b$ is real, and take real parts.
More specifically, we obtain first
\begin{equation}\label{3.3.1a}
\begin{split}
(u_r,u)= &-\mathrm{i}\Big{[}(D\nabla u,\nabla u
)-\mathrm{i}\Big{(}\int_{\theta_{\rm min}}^{\theta_{\rm
max}}\lambda(r,\theta)
u(r,1,\theta)\overline{u}(1,\theta)d\theta-(b
\nabla u,u)\Big{)}\Big{]}\\
&+\mathrm{i}(\beta u,u)+(F,u).
\end{split}
\end{equation}
Observe that $${\rm Re}(b\nabla u,u)=-\frac{1}{2}(b_{1y}
u,u)-\frac{1}{2}(b_{2\theta} u,u)+\frac{1}{2}\int_{\theta_{\rm
min}}^{\theta_{\rm
max}}b_1(r,1,\theta)|u(r,1,\theta)|^2d\theta,$$ since $b$ is real
and $u=0$ at $y=0$, $\theta=\theta_{\rm min},\theta_{\rm max}$.
Since $D$ is real, then using this observation in \eqref{3.3.1a}
we arrive at
\begin{equation*}
\begin{split}
\frac{1}{2}\frac{d}{dr}\|u\|^2=&\int_{\theta_{\rm
min}}^{\theta_{\rm
max}}[-{\rm Re}\lambda(r,\theta)+\frac{1}{2}b_1(r,1,\theta)]|u(r,1,\theta)|^2d\theta\\
&-\frac{1}{2}([b_{1y}+b_{2\theta}]u,u)-({\rm Im}(\beta) u,u)+{\rm
Re}(F,u).
\end{split}
\end{equation*}
Using the condition \eqref{con2} and Gr\"onwall's inequality we
obtain the stability estimate
\begin{equation}\label{3.72}
\left.\begin{array}{l} \|u\|\leq c\|u_0\|+c\int_{r_{\rm
min}}^{r_{\rm max}}\|F\|\,dr.
\end{array}\right.
\end{equation}
Uniqueness of the solution $u$ follows readily from the estimate
above.
\end{proof}
\begin{remark}\label{h1st}
If \eqref{con2} holds as equality then the sesquilinear form is
Hermitian. Therefore, if $F=0$,  using \eqref{3.70}, setting
$\phi=u_r-{\rm i}(\beta+\delta)u$ and taking imaginary parts we
obtain
\begin{equation*}
\begin{split}
\frac{1}{2}\frac{d}{dt}{\rm Re}\mathcal{B}(r;u(r),u(r))\leq c_1
\|u\|_1^2\leq c{\rm Re}\mathcal{B}(r;u(r),u(r)),
\end{split}
\end{equation*}
so that $c\|u\|_1^2\leq {\rm Re}\mathcal{B}(r;u(r),u(r))\leq
c\|u_0\|_1^2$, i.e.  we also obtain an $H^1$ stability estimate.
\end{remark}
\begin{remark}
Note that for the specific case of problem \eqref{cvpro}, if
$\beta_\psi$ is real, we have $F=0$, ${\rm Im}(\beta):={\rm
Im}(\beta_v)=-\frac{s_r}{2s}$, $b_1=y\frac{s_r}{s}$, $b_2=0$ and
\eqref{con2} holds as equality, therefore (cf. the proof of the
previous theorem) we obtain the conservation property
$$\|v\|=\|v_0\|$$ for any $r$, while
the problem is also $H^1$--stable.
\end{remark}
\subsection{The numerical scheme}
For $N > 1$ integer, we consider a uniform partition in range
$r_{\rm min} =r^0<r^1<\cdots<r^N=r_{\rm max}$, $0\leq n\leq N$,
$k:=r^{n+1}-r^n=\frac{1}{N}$ for any $n\leq N-1$, and set
$r^{n+\frac{1}{2}}:=\frac{r^n+r^{n+1}}{2}$. If $u$ is the
solution of the continuous problem \eqref{divgen},
 we approximate $u(r^{n+1})$ by $U^{n+1}\in S_h$ as follows: for $U^n$
known we seek $U^{n+1}\in S_h$ such that
\begin{equation}\label{3.101}
\begin{split}\Big{(}\frac{U^{n+1}-U^n}{k},\phi\Big{)}=&-{\rm
i}\mathcal{B}\Big{(}r^{\nn};\frac{U^{n+1}+U^n}{2},\phi\Big{)}+{\rm
i}\Big{(}(\beta(\rnn)+\delta)\frac{U^{n+1}+U^n}{2},\phi\Big{)}\\
&+(F(r^{\nn}),\phi),
\end{split}
\end{equation}
for any $\phi\in S_h$, and any $0\leq n\leq N-1$. In order to
obtain an optimal order approximation we shall take
$U^0:=R_h(r^0)u_0\in S_h$.
\begin{remark}\label{rem1}
Let $w\in \HH$. The need of the condition \eqref{con2} appears
once again (recall that \eqref{con2} was used for the $L^2$
stability of the continuous problem). More specifically, since
$w\in\HH$ then by \eqref{con2} the following inequality holds
\begin{equation}\label{*3}
\left.\begin{array}{l} {\rm Re}\Big{\{}-{\rm
i}\mathcal{B}\Big{(}r;w,w\Big{)}\Big{\}}\leq c\|w\|^2.
\end{array}\right.
\end{equation}
\end{remark}
\begin{theorem}\label{3.3.6}
The fully discrete scheme (\ref{3.101}) is $L^2$-stable.
\end{theorem}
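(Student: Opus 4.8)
The plan is to carry out the classical energy argument for Crank--Nicolson schemes, testing \eqref{3.101} with the natural midpoint value and exploiting the key inequality \eqref{*3}, which encodes condition \eqref{con2}. First I would set $\phi=\unn$ in \eqref{3.101} and take real parts. On the left-hand side one has $\mathrm{Re}\big(\frac{U^{n+1}-U^n}{k},\unn\big)$; since $(U^{n+1}-U^n,U^{n+1}+U^n)=\|U^{n+1}\|^2-\|U^n\|^2+2\mathrm{i}\,\mathrm{Im}(U^{n+1},U^n)$, the cross term is purely imaginary and drops out upon taking real parts, so the left-hand side collapses to the telescoping quantity $\frac{1}{2k}\big(\|U^{n+1}\|^2-\|U^n\|^2\big)$.

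Next I would estimate the real part of the right-hand side term by term. The contribution $\mathrm{Re}\{-\mathrm{i}\mathcal{B}(r^{\nn};\unn,\unn)\}$ is controlled directly by \eqref{*3}, giving a bound $c\|\unn\|^2$; this is the step where condition \eqref{con2} is essential, since in establishing \eqref{*3} it forces the boundary contribution $\int_{\theta_{\rm min}}^{\theta_{\rm max}}\big(\tfrac12 b_1(r,1,\theta)-\mathrm{Re}\,\lambda(r,\theta)\big)|w(1,\theta)|^2\,d\theta$ (with $w=\unn$) to be nonpositive, leaving only the harmless zeroth-order terms coming from $b_{1y}+b_{2\theta}$. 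For the term $\mathrm{Re}\{\mathrm{i}((\beta+\delta)\unn,\unn)\}$, taking real parts kills the $\delta$ and $\mathrm{Re}\,\beta$ pieces (they are purely imaginary) and leaves $-(\mathrm{Im}\,\beta\,\unn,\unn)\le c\|\unn\|^2$. Finally $\mathrm{Re}(F(r^{\nn}),\unn)\le\|F(r^{\nn})\|\,\|\unn\|$ by Cauchy--Schwarz.

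Collecting these bounds, and using the elementary estimate $\|\unn\|^2\le\tfrac12(\|U^{n+1}\|^2+\|U^n\|^2)$ together with Young's inequality on the $F$-term, I would arrive at an inequality of the form $(1-Ck)\|U^{n+1}\|^2\le(1+Ck)\|U^n\|^2+Ck\|F(r^{\nn})\|^2$. For $k$ small enough that $Ck\le\tfrac12$ one may divide by $1-Ck$ to obtain $\|U^{n+1}\|^2\le(1+C'k)\|U^n\|^2+C'k\|F(r^{\nn})\|^2$, and a discrete Gr\"onwall argument over $n$ (using $(1+C'k)^n\le e^{C'(r_{\rm max}-r_{\rm min})}$) then yields the stability estimate $\max_{0\le n\le N}\|U^n\|\le C\big(\|U^0\|+\big(\sum_{n=0}^{N-1}k\|F(r^{\nn})\|^2\big)^{1/2}\big)$, uniformly in $h$ and $k$, which is exactly the asserted $L^2$-stability.

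The main obstacle is the non-Hermitian, $r$-dependent sesquilinear form: the first-order convection term $(b\nabla\cdot,\cdot)$ and the Robin boundary integral must be shown not to destabilize the scheme. This is precisely what condition \eqref{con2} guarantees through \eqref{*3}; once that inequality is in hand, the remainder is a routine discrete Gr\"onwall computation, the only mild restriction being the smallness of $k$ needed to absorb the $\|U^{n+1}\|^2$ term that appears with a positive coefficient on the right.
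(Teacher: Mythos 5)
Your proposal is correct and follows essentially the same route as the paper: test \eqref{3.101} with the midpoint value (the paper uses $\phi=U^{n+1}+U^n$, which is the same up to a factor of $2$), take real parts so the left-hand side telescopes, control the form via the inequality \eqref{*3} of Remark \ref{rem1} (i.e.\ condition \eqref{con2}), and close with a discrete Gr\"onwall argument for $k$ small. The only cosmetic difference is that you run the recursion on squared norms and end with an $\ell^2$-in-$r$ bound on $F$, whereas the paper works with the norms themselves and a $\max_n\|F\|$ bound; both yield the asserted $L^2$-stability.
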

\begin{proof}
In (\ref{3.101}) we set $\phi=U^{n+1}+U^n\in S_h\subset \HH$,
take real parts and use the estimate of Remark \ref{rem1} to
arrive at
\begin{equation*}
\left.\begin{array}{l}
 (1-ck)\|U^{n+1}\|\leq (1+ck)\|U^{n}\|+ck\|F(\rnn)\|.
\end{array}\right.
\end{equation*}
Choosing $k$ sufficiently small we get a
stability result for the scheme (\ref{3.101})
\begin{equation*}
\left.\begin{array}{l}
 \|U^n\|\leq c\|U^0\|+c\;\displaystyle{\max_{n\leq N}}\|F\|,
\end{array}\right.
\end{equation*}
for any $1\leq n\leq N$. Consequently, uniqueness of solution
in $S_h$ is also established.
\end{proof}
%
\subsection{Error estimates for the fully discrete scheme}
\subsubsection{Preliminaries}
We define in $S_h\subset\HH$ the quantities
\begin{equation*}
\begin{split}
&\theta_1^n:=U^n-R_h(\rnn)u(r^n)+\frac{k^2}{8}R_h(\rnn)u_{rr}(\rnn),\;\;0\leq n \leq N-1,\\
&\theta_2^{n+1}:=U^{n+1}-R_h(\rnn)u(r^{n+1})+\frac{k^2}{8}R_h(\rnn)u_{rr}(\rnn),\;\;0\leq
n \leq N-1,
\end{split}
\end{equation*}
where $U^n$ is the solution of the fully discrete scheme
(\ref{3.101}).
\begin{remark}\label{tf}
The main idea is to mimic the continuous problem. In the fully
discrete scheme, we set $\phi:=\theta_2^{n+1}+\theta_1^n$ as test
function. The choice of $\theta_1^n$, $\theta_2^{n+1}$ is not
standard and is made in order to treat efficiently the
$r$-dependent sesquilinear form at the midpoints of the partition
and since the projection is range-dependent. Therefore, in
$\phi$, the projections are computed in $\rnn$. The introduction
of the specific additive term
\begin{equation}\label{adpr}
\frac{k^2}{8}R_h(\rnn)u_{rr}(\rnn)
\end{equation}
 is motivated by the
approximation
\begin{equation*}\label{taylor}
\begin{split}
\frac{u(r^n)+u(r^{n+1})}{2}-u(r^{n+\frac{1}{2}})=\frac{k^2}{8}u_{rr}(\rnn)+\mathcal{O}(k^4),
\end{split}
\end{equation*}
used in \eqref{n1**}. The residual being of order
$\mathcal{O}(k^4)$ permits us to apply then in \eqref{n1a} the
inverse inequality without loss of optimality in space, and avoid
thus any integration by parts (denote that in this case
suboptimal trace integral terms would appear, as the problem is
posed in $S_h\subset\HH\neq H_0^1(\mathfrak{D})$). Furthermore,
the term \eqref{adpr} is related to the approximations
\begin{equation*}
\begin{split}
&u(r^{n+1})-\frac{k^2}{8}u_{rr}(\rnn)=u(\rnn)+\frac{k}{2}u_r(\rnn)+\mathcal{O}(k^3),\\
&u(r^{n})-\frac{k^2}{8}u_{rr}(\rnnm)=u(\rnnm)+\frac{k}{2}u_r(\rnnm)+\mathcal{O}(k^3)
\end{split}
\end{equation*}
used in the proof of Lemma \ref{elem2} when treating the
$r$-derivative of the projection error.
\end{remark}

We notice that
\begin{equation*}
\begin{split}
\frac{U^{n+1}-U^n}{k}=&\frac{\theta_2^{n+1}-\theta_1^n}{k}+\frac{R_h(\rnn)u(r^{n+1})-R_h(\rnn)u(r^n)}{k},\\
\frac{U^{n+1}+U^n}{2}=&\frac{\theta_2^{n+1}+\theta_1^n}{2}+\frac{R_h(\rnn)u(r^{n+1})+R_h(\rnn)u(r^n)}{2}\\
&-\frac{k^2}{8}R_h(\rnn)u_{rr}(\rnn).
\end{split}
\end{equation*}
Replacing these identities in the fully discrete scheme we obtain
\begin{equation}\label{3.1033}
 \begin{split}
&\Big{(}\frac{\theta_2^{n+1}-\theta_1^n}{k},\phi\Big{)}=-\Big{(}\frac{R_h(\rnn)u(r^{n+1})-R_h(\rnn)u(r^n)}{k},\phi\Big{)}
\\&-{\rm
i}\mathcal{B}\Big{(}\rnn;\frac{\theta_2^{n+1}+\theta_1^n}{2},\phi\Big{)}\\
&-{\rm
i}\mathcal{B}\Big{(}\rnn;\frac{R_h(\rnn)u(r^{n+1})+R_h(\rnn)u(r^n)}{2},\phi\Big{)}\\
&+{\rm
i}\mathcal{B}\Big{(}\rnn;\frac{k^2}{8}R_h(\rnn)u_{rr}(\rnn),\phi\Big{)}\\
&+{\rm
i}\Big{(}(\beta(\rnn)+\delta)\frac{\theta_2^{n+1}+\theta_1^n}{2},\phi\Big{)}\\
&+{\rm
i}\Big{(}(\beta(\rnn)+\delta)\frac{R_h(\rnn)u(r^{n+1})+R_h(\rnn)u(r^n)}{2},\phi\Big{)}\\
&-{\rm
i}\frac{k^2}{8}\Big{(}(\beta(\rnn)+\delta)R_h(\rnn)u_{rr}(\rnn),\phi\Big{)}
+(F(\rnn),\phi).
\end{split}
\end{equation}
From the continuous problem we have that
\begin{equation}\label{3.103*}
\begin{split}
(\partial_ru(\rnn),\phi)=&-{\rm
i}\mathcal{B}\Big{(}\rnn;u(\rnn),\phi\Big{)}\\
&+{\rm i}
\Big{(}(\beta(\rnn)+\delta)u(\rnn),\phi\Big{)}+(F(\rnn),\phi).
\end{split}
\end{equation}
We now solve \eqref{3.103*} for $(F(\rnn),\phi)$, replace in
(\ref{3.1033}), and use the definition of the elliptic projection
$R_h$ to arrive at
\begin{equation}\label{3.1034}
 \begin{split}
&\Big{(}\frac{\theta_2^{n+1}-\theta_1^n}{k},\phi\Big{)}=-\Big{(}\frac{R_h(\rnn)u(r^{n+1})-R_h(\rnn)u(r^n)}{k}
-u_r(\rnn),\phi\Big{)}\\
&-{\rm
i}\mathcal{B}\Big{(}\rnn;\frac{\theta_2^{n+1}+\theta_1^n}{2},\phi\Big{)}
-{\rm
i}\mathcal{B}\Big{(}\rnn;\frac{u(r^{n+1})+u(r^n)}{2}-u(\rnn),\phi\Big{)}\\
&+{\rm
i}\mathcal{B}\Big{(}\rnn;\frac{k^2}{8}R_h(\rnn)u_{rr}(\rnn),\phi\Big{)}\\
&+{\rm
i}\Big{(}(\beta(\rnn)+\delta)\frac{\theta_2^{n+1}+\theta_1^n}{2},\phi\Big{)}\\
&+{\rm
i}\Big{(}(\beta(\rnn)+\delta)\Big{[}\frac{R_h(\rnn)u(r^{n+1})+R_h(\rnn)u(r^n)}{2}-u(\rnn)\Big{]},\phi\Big{)}\\
&-{\rm
i}\frac{k^2}{8}\Big{(}(\beta(\rnn)+\delta)R_h(\rnn)u_{rr}(\rnn),\phi\Big{)}.
\end{split}
\end{equation}
By Taylor's formula the following identity holds for $r_1,r_2\in
(r^n,r^{n+1})$
\begin{equation*}\label{taylor}
\begin{split}
\frac{u(r^n)+u(r^{n+1})}{2}-u(r^{n+\frac{1}{2}})=\frac{k^2}{8}u_{rr}(\rnn)+\frac{k^4}{2\cdot
16\cdot 4!}[u_{rrrr}(r_1)+u_{rrrr}(r_2)].
\end{split}
\end{equation*}
Using the above in \eqref{3.1034} we obtain
\begin{equation}\label{n1**}
\begin{split}
&-{\rm
i}\mathcal{B}\Big{(}\rnn;\frac{u(r^{n+1})+u(r^n)}{2}-u(\rnn),\phi\Big{)}+{\rm
i}\mathcal{B}\Big{(}\rnn;\frac{k^2}{8}R_h(\rnn)u_{rr}(\rnn),\phi\Big{)}\\
&=-{\rm
i}\mathcal{B}\Big{(}\rnn;-\frac{k^2}{8}\Big{[}R_h(\rnn)u_{rr}(\rnn)-u_{rr}(\rnn)\Big{]},\phi\Big{)}\\
&-{\rm i}\mathcal{B}\Big{(}\rnn;\frac{k^4}{2\cdot 16\cdot
4!}[u_{rrrr}(r_1)+u_{rrrr}(r_2)],\phi\Big{)}\\
&=0-{\rm i}\mathcal{B}\Big{(}\rnn;\frac{k^4}{2\cdot 16\cdot
4!}[u_{rrrr}(r_1)+u_{rrrr}(r_2)],\phi\Big{)}.
\end{split}
\end{equation}
Therefore, applying an inverse inequality we obtain
\begin{equation}\label{n1a}
\begin{split}
&{\rm Re}\Big{[}-{\rm
i}\mathcal{B}\Big{(}\rnn;\frac{u(r^{n+1})+u(r^n)}{2}-u(\rnn)-\frac{k^2}{8}R_h(\rnn)u_{rr}(\rnn),\phi\Big{)}\Big{]}\\
&\leq ck^4\displaystyle{\max_{r}}\|u_{rrrr}\|_1\|\phi\|_1\leq
ck^4h^{-1}\displaystyle{\max_{r}}\|u_{rrrr}\|_1\|\phi\|.
\end{split}
\end{equation}

In addition, the Taylor formula gives
\begin{equation*}
\begin{split}
&{\rm
i}\Big{(}(\beta(\rnn)+\delta)\Big{[}\frac{R_h(\rnn)u(r^{n+1})+R_h(\rnn)u(r^n)}{2}-u(\rnn)\Big{]},\phi\Big{)}\\
&={\rm
i}\Big{(}(\beta(\rnn)+\delta)\Big{[}R_h(\rnn)u(\rnn)-u(\rnn)\Big{]},\phi\Big{)}\\
&+ {\rm
i}\Big{(}(\beta(\rnn)+\delta)R_h(\rnn)\Big{[}\frac{k^2}{8}u_{rr}(\rnn)+\frac{k^4}{2\cdot
16\cdot 4!}[u_{rrrr}(r_1)+u_{rrrr}(r_2)]\Big{]},\phi\Big{)}\\
&={\rm
i}\Big{(}(\beta(\rnn)+\delta)\Big{[}R_h(\rnn)u(\rnn)-u(\rnn)\Big{]},\phi\Big{)}\\
&+ {\rm
i}\Big{(}(\beta(\rnn)+\delta)(R_h(\rnn)-I)\Big{[}\frac{k^2}{8}u_{rr}(\rnn)+\frac{k^4}{2\cdot
16\cdot 4!}[u_{rrrr}(r_1)+u_{rrrr}(r_2)]\Big{]},\phi\Big{)}\\
&+{\rm
i}\Big{(}(\beta(\rnn)+\delta)\Big{[}\frac{k^2}{8}u_{rr}(\rnn)+\frac{k^4}{2\cdot
16\cdot 4!}[u_{rrrr}(r_1)+u_{rrrr}(r_2)]\Big{]},\phi\Big{)}.
\end{split}
\end{equation*}
Thus, we obtain
\begin{equation}\label{n2}
\begin{split}
&{\rm Re}\Big{[}{\rm
i}\Big{(}(\beta(\rnn)+\delta)\Big{[}\frac{R_h(\rnn)u(r^{n+1})+R_h(\rnn)u(r^n)}{2}-u(\rnn)\Big{]},\phi\Big{)}\Big{]}\\
&\leq c\Big{\{}h^{\tau+1}+k^2\Big{\}}\|\phi\|.
\end{split}
\end{equation}

Also Taylor gives for $r_3,r_4\in (r^n,r^{n+1})$
\begin{equation*}
\frac{u(r^{n+1})-u(r^n)}{k}=\frac{k^2}{8\cdot
3!}[u_{rrr}(r_3)+u_{rrr}(r_4)]+u_r(\rnn),
\end{equation*}
therefore,
\begin{equation*}
\begin{split}
&-\Big{(}\frac{R_h(\rnn)u(r^{n+1})-R_h(\rnn)u(r^n)}{k}
-u_r(\rnn),\phi\Big{)}\\
&=-\Big{(}[R_h(\rnn)-I]u_r(r^{n+1}),\phi\Big{)} -\frac{k^2}{8\cdot
3!}\Big{(}[R_h(\rnn)-I][u_{rrr}(r_3)+u_{rrr}(r_4)],\phi\Big{)}\\
&-\frac{k^2}{8\cdot
3!}\Big{(}u_{rrr}(r_3)+u_{rrr}(r_4),\phi\Big{)}.
\end{split}
\end{equation*}
The above yields
\begin{equation}\label{n3}
\begin{split}
&{\rm
Re}\Big{[}-\Big{(}\frac{R_h(\rnn)u(r^{n+1})-R_h(\rnn)u(r^n)}{k}
-u_r(\rnn),\phi\Big{)}\Big{]}\leq
c\Big{\{}h^{\tau+1}+k^2\Big{\}}\|\phi\|.
\end{split}
\end{equation}

Let us now assume that $k<1$ and $k\leq ch^{\frac{1}{2}}$. In
\eqref{3.1034}, we take real parts and use \eqref{n1a},
\eqref{n2}, and \eqref{n3} to obtain
\begin{equation}\label{n4}
  \begin{split}
 &{\rm Re}
\Big{[}\Big{(}\frac{\theta_2^{n+1}-\theta_1^n}{k},\phi\Big{)}\Big{]}\leq
c\Big{\{}h^{\tau+1}+k^2\Big{\}}\|\phi\|\\
&+{\rm Re}\Big{[}-{\rm
i}\mathcal{B}\Big{(}\rnn;\frac{\theta_2^{n+1}+\theta_1^n}{2},\phi\Big{)}+{\rm
i}\Big{(}(\beta(\rnn)+\delta)\frac{\theta_2^{n+1}+\theta_1^n}{2},\phi\Big{)}\Big{]}.
\end{split}
\end{equation}
In the above estimate, we set $\phi:=\theta_2^{n+1}+\theta_1^n\in
S_h\subset\HH$, and use the estimate of  Remark \ref{rem1} to
obtain for $k$ sufficiently small
\begin{equation}\label{n5}
\|\theta_2^{n+1}\|\leq
\Big{(}\frac{1+ck}{1-ck}\Big{)}\|\theta_1^n\|+\mathcal{A},\;\;0\leq
n\leq N-1,
\end{equation}
where $\mathcal{A}\leq\frac{ck(h^{\tau+1}+k^2)}{1-ck}$.

Let us now define
\begin{gather}
\theta^n:=U^n-R_h(r^n)u(r^n),\;\;0\leq n\leq N.
\end{gather}
and
\begin{equation}\label{n6}
\begin{split}
&B_2^{(n+1)}:=-R_h(r^{n+1})u(r^{n+1})+R_h(\rnn)u(r^{n+1})-\frac{k^2}{8}R_h(\rnn)u_{rr}(\rnn),\;\;0\leq n\leq N-1,\\
&B_1^{(n)}:=-R_h(r^{n})u(r^{n})+R_h(\rnn)u(r^{n})-\frac{k^2}{8}R_h(\rnn)u_{rr}(\rnn),\;\;0\leq
n\leq N-1.
\end{split}
\end{equation}
So, we obtain
\begin{equation}\label{n7}
\begin{split}
&\theta_2^{n+1}=\theta^{n+1}-B_2^{(n+1)},\;\;0\leq n\leq N-1,\\
&\theta_1^n=\theta^{n}-B_1^{(n)},\;\;0\leq n\leq N-1.
\end{split}
\end{equation}
We replace in \eqref{n5} so that for any $1\leq n\leq N-1$ we
arrive at
\begin{equation}\label{n8}
\begin{split}
\|\theta^{n+1}-B_2^{(n+1)}\|\leq&
\Big{(}\frac{1+ck}{1-ck}\Big{)}\|\theta^{n}-B_1^{(n)}\|
+\mathcal{A}\\
\leq&
\Big{(}\frac{1+ck}{1-ck}\Big{)}\|\theta^{n}-B_2^{(n)}\|+\mathcal{A}\\
&+\Big{(}\frac{1+ck}{1-ck}\Big{)}\|B_2^{(n)}-B_2^{(n+1)}\|+
\Big{(}\frac{1+ck}{1-ck}\Big{)}\|B_2^{(n+1)}-B_1^{(n)}\|.
\end{split}
\end{equation}
\subsubsection{The estimates}
We prove first the following lemmas.
\begin{lemma}\label{elem1}For any $0\leq n\leq N-1$ it holds that
\begin{equation}\label{n9}
\|B_2^{(n+1)}-B_1^{(n)}\|\leq ckh^{\tau+1}.
\end{equation}
\end{lemma}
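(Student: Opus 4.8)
The plan is to exploit that the two additive correction terms $\frac{k^2}{8}R_h(\rnn)u_{rr}(\rnn)$ occurring in $B_2^{(n+1)}$ and $B_1^{(n)}$ are identical, so that they cancel in the difference, leaving
\begin{equation*}
B_2^{(n+1)}-B_1^{(n)}=\big[R_h(\rnn)-R_h(r^{n+1})\big]u(r^{n+1})-\big[R_h(\rnn)-R_h(r^{n})\big]u(r^{n}).
\end{equation*}
The essential difficulty is that the projection operator itself varies with the range $r$, so no single projection estimate applies directly. To overcome this I introduce the auxiliary function
\begin{equation*}
\Phi(s):=R_h(\rnn)u(s)-R_h(s)u(s),\qquad s\in[r^n,r^{n+1}],
\end{equation*}
for which a direct verification shows $B_2^{(n+1)}-B_1^{(n)}=\Phi(r^{n+1})-\Phi(r^n)$.

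Next I would split $\Phi=\rho-\sigma$, where $\rho(s):=R_h(\rnn)u(s)-u(s)$ involves a \emph{fixed} projection operator applied to the varying function $u(s)$, while $\sigma(s):=R_h(s)u(s)-u(s)$ is exactly the range-dependent projection error analysed in Section 2 (note that $u_r(s)\in\HH$, since the Dirichlet boundary does not move with $r$). Applying the fundamental theorem of calculus and the linearity of the fixed operator $R_h(\rnn)$, which commutes with $\partial_s$, gives
\begin{equation*}
\Phi(r^{n+1})-\Phi(r^n)=\int_{r^n}^{r^{n+1}}\Phi'(s)\,ds,\qquad \Phi'(s)=\big[R_h(\rnn)u_r(s)-u_r(s)\big]-\partial_s\big(R_h(s)u(s)-u(s)\big).
\end{equation*}

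The two resulting pieces are then controlled by the estimates already proved. The first bracket is the $L^2$-projection error of the fixed function $u_r(s)$ under the projection at the fixed parameter $\rnn$, so Proposition \ref{prop1} (estimate \eqref{L2e}) yields $\|R_h(\rnn)u_r(s)-u_r(s)\|\le c\,h^{\tau+1}\|u_r(s)\|_{\tau+1}$; the second is precisely the quantity bounded in Proposition \ref{prop4}, whence $\|\partial_s(R_h(s)u(s)-u(s))\|\le c\,h^{\tau+1}(\|u(s)\|_{\tau+1}+\|u_r(s)\|_{\tau+1})$. Adding these gives $\|\Phi'(s)\|\le c\,h^{\tau+1}(\|u(s)\|_{\tau+1}+\|u_r(s)\|_{\tau+1})$ uniformly in $s$, and integration over the interval $[r^n,r^{n+1}]$ of length $k$ supplies the remaining factor $k$:
\begin{equation*}
\|B_2^{(n+1)}-B_1^{(n)}\|\le\int_{r^n}^{r^{n+1}}\|\Phi'(s)\|\,ds\le c\,k\,h^{\tau+1}\max_{s}\big(\|u(s)\|_{\tau+1}+\|u_r(s)\|_{\tau+1}\big),
\end{equation*}
which is the asserted bound once the (bounded) Sobolev norms of the sufficiently smooth solution $u$ are absorbed into the constant. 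The main obstacle is conceptual rather than computational: recognising that the $k^2$ terms cancel and that the surviving expression is the first-order increment $\Phi(r^{n+1})-\Phi(r^n)$ of a function whose derivative decomposes cleanly into a fixed-operator projection error (governed by Proposition \ref{prop1}) and the variable-operator projection-error derivative (governed by Proposition \ref{prop4}); the extra order $k$ beyond the bare $h^{\tau+1}$ arises entirely from the length of the integration interval via the fundamental theorem of calculus.
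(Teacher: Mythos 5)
Your proof is correct and follows essentially the same route as the paper: the $\tfrac{k^2}{8}$ terms cancel, and the remaining difference is split into the fixed projection $R_h(r^{n+\frac12})-I$ acting on the increment of $u$ (controlled by Proposition \ref{prop1}) plus the integral of $\partial_r\left(R_h(r)u(r)-u(r)\right)$ over an interval of length $k$ (controlled by Proposition \ref{prop4}). The only cosmetic difference is that the paper Taylor-expands $u(r^{n+1})-u(r^n)$ while you write it as $\int_{r^n}^{r^{n+1}}u_r(s)\,ds$ and bound the integrand pointwise; the two are equivalent.
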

\begin{proof}
By the definition of $B_2^{(n+1)}, B_1^{(n)}$ we obtain
$$B_2^{(n+1)}-B_1^{(n)}=[R_h(\rnn)-I][u(r^{n+1})-u(r^n)]-\int_{r^n}^{r^{n+1}}\partial_r[R_h(r)u(r)-u(r)]dr.$$
Using Taylor's theorem, we obtain for $r_5,r_6\in(r^n,r^{n+1})$
$$u(r^{n+1})-u(r^n)=ku_r(\rnn)+\frac{k^3}{8\cdot
3!}[u_{rrr}(r_5)-u_{rrr}(r_6)],$$
The result now follows from the
estimates of $R_h(r)v(s)-v(s)$ and $\partial_r(R_h(r)v(r)-v(r))$.
\end{proof}
\begin{lemma}\label{elem2}For any $1\leq n\leq N-1$ it holds that
\begin{equation}\label{n9}
\|B_2^{(n+1)}-B_2^{(n)}\|\leq ck\Big{\{}h^{\tau+1}+k^2\Big{\}}.
\end{equation}
\end{lemma}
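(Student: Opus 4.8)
The plan is to reproduce at the discrete level the cancellations recorded in Remark~\ref{tf}, and to convert every difference of $R_h$ evaluated at neighbouring ranges into an integral of the range-derivative of the projection error, so that Propositions~\ref{prop1} and~\ref{prop4} apply. First I would attach the additive term $\frac{k^2}{8}R_h(\cdot)u_{rr}(\cdot)$ to the neighbouring projection, writing
\begin{equation*}
B_2^{(n+1)}=-R_h(r^{n+1})u(r^{n+1})+R_h(\rnn)\Big[u(r^{n+1})-\frac{k^2}{8}u_{rr}(\rnn)\Big],
\end{equation*}
together with the analogous identity for $B_2^{(n)}$ (with $\rnn$ replaced by $\rnnm$ and $r^{n+1}$ by $r^n$). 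I would then insert the two Taylor identities of Remark~\ref{tf}, namely $u(r^{n+1})-\frac{k^2}{8}u_{rr}(\rnn)=u(\rnn)+\frac{k}{2}u_r(\rnn)+\rho^{n+1}$ with $\rho^{n+1}=\mathcal{O}(k^3)$, and its counterpart $\rho^{n}$ at level $n$, and split each term through $R_h(s)w(s)=w(s)+(R_h(s)-I)w(s)$. This organizes $B_2^{(n+1)}-B_2^{(n)}$ into three pieces: a \emph{smooth piece} $S$ involving only $u$ and $u_r$ at the nodes $r^{n+1},r^n,\rnn,\rnnm$; a \emph{projection-error piece} built from $e(s):=(R_h(s)-I)u(s)$ and $\widetilde e(s):=(R_h(s)-I)u_r(s)$; and a \emph{remainder piece} $R_h(\rnn)\rho^{n+1}-R_h(\rnnm)\rho^{n}$.

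For $S$ I would Taylor expand around $r^n$; the terms of orders $k$ and $k^2$ cancel identically and one is left with $S=-\frac{k^3}{8}u_{rrr}(r^n)+\mathcal{O}(k^4)$, hence $\|S\|\le ck^3$. For the projection-error piece the decisive step is to write each range-difference as an integral, $e(r^{n+1})-e(r^n)=\int_{r^n}^{r^{n+1}}\partial_r e\,dr$ and $e(\rnn)-e(\rnnm)=\int_{\rnnm}^{\rnn}\partial_r e\,dr$, and to invoke Proposition~\ref{prop4}, which bounds $\|\partial_r e\|$ by $ch^{\tau+1}(\|u\|_{\tau+1}+\|u_r\|_{\tau+1})$; since each interval has length $k$, these contribute $ckh^{\tau+1}$, while $\frac{k}{2}\big(\widetilde e(\rnn)-\widetilde e(\rnnm)\big)$ is controlled by $\frac{k}{2}\big(\|\widetilde e(\rnn)\|+\|\widetilde e(\rnnm)\|\big)\le ckh^{\tau+1}$ directly from Proposition~\ref{prop1}. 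For the remainder piece I would use the uniform $H^1$-stability of $R_h$ that follows from \eqref{cont} and \eqref{coer}, together with $\|\rho^{n+1}\|_1,\|\rho^{n}\|_1\le ck^3$. Adding the three bounds gives $\|B_2^{(n+1)}-B_2^{(n)}\|\le c(k^3+kh^{\tau+1})=ck\{h^{\tau+1}+k^2\}$.

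The main obstacle is the projection-error piece. A crude bound $\|e(r^{n+1})\|+\|e(r^n)\|$ only yields $O(h^{\tau+1})$ and loses the indispensable factor $k$; recovering it forces one to pair terms evaluated at consecutive ranges and to use the $L^2$-estimate on the range-derivative of the projection error from Proposition~\ref{prop4}, which is precisely the refined ingredient proved earlier. The careful bookkeeping of the four ranges $r^{n+1},r^n,\rnn,\rnnm$ at which $R_h$ is evaluated, and the exact cancellation of the $k$- and $k^2$-order terms in $S$, are the remaining delicate points; the restriction $1\le n\le N-1$ is what keeps $\rnnm$ inside $[r_{\rm min},r_{\rm max}]$.
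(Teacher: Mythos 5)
Your proposal is correct and takes essentially the same route as the paper: the same regrouping of $B_2^{(n+1)}-B_2^{(n)}$, the same Taylor identities $u(r^{n+1})-\tfrac{k^2}{8}u_{rr}(\rnn)=u(\rnn)+\tfrac{k}{2}u_r(\rnn)+\mathcal{O}(k^3)$, the conversion of neighbouring-range differences of the projection error into integrals of $\partial_r\big(R_h(r)v(r)-v(r)\big)$ controlled by Proposition~\ref{prop4}, and the $\mathcal{O}(k^3)$ cancellation of the purely smooth terms. The only (harmless) deviation is that you bound $\tfrac{k}{2}\big(\widetilde e(\rnn)-\widetilde e(\rnnm)\big)$ directly by Proposition~\ref{prop1}, whereas the paper absorbs $\tfrac{k}{2}u_r$ into the function being projected inside the integral over $[\rnnm,\rnn]$.
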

\begin{proof}
By the definition of $B_2^{(n+1)}, B_1^{(n)}$
we have that
\begin{equation}\label{n10}
\begin{split}
&B_2^{(n)}-B_2^{(n+1)}=R_h(\rnn)\Big{[}\frac{k^2}{8}u_{rr}(\rnn)-u(r^{n+1})\Big{]}\\
&-R_h(\rnnm)\Big{[}\frac{k^2}{8}u_{rr}(\rnnm)-u(r^{n})\Big{]}+R_h(r^{n+1})u(r^{n+1})-R_h(r^{n})u(r^{n}).
\end{split}
\end{equation}
Using Taylor's theorem we have that for
$r_7\in(\rnn,r^{n+1}),\;\;r_8\in(\rnnm,r^n)$
\begin{equation*}
\begin{split}
&u(r^{n+1})-\frac{k^2}{8}u_{rr}(\rnn)=u(\rnn)+\frac{k}{2}u_r(\rnn)+\frac{k^3}{8\cdot
3!}u_{rrr}(r_7),\\
&u(r^{n})-\frac{k^2}{8}u_{rr}(\rnnm)=u(\rnnm)+\frac{k}{2}u_r(\rnnm)+\frac{k^3}{8\cdot
3!}u_{rrr}(r_8).
\end{split}
\end{equation*}
Replacing these expansions  in \eqref{n10} it follows that
\begin{equation}\label{n11}
\begin{split}
&B_2^{(n)}-B_2^{(n+1)}=-R_h(\rnn)\Big{[}u(\rnn)+\frac{k}{2}u_r(\rnn)\Big{]}\\
&+R_h(\rnnm)\Big{[}u(\rnnm)+\frac{k}{2}u_r(\rnnm)\Big{]}+R_h(r^{n+1})u(r^{n+1})-R_h(r^{n})u(r^{n})+\mathcal{B}_1\\
&=\int_{r^n}^{r^{n+1}}[\partial_r
R_h(r)u(r)-u_r(r)]dr\\
&-\int_{\rnnm}^{\rnn}\Big{(}\partial_r
R_h(r)[u(r)+\frac{k}{2}u_r(r)]-[u_r(r)+\frac{k}{2}u_{rr}(r)]\Big{)}dr\\
&+u(r^{n+1})-u(r^n)-u(\rnn)+u(\rnnm)-\frac{k}{2}u_r(\rnn)+\frac{k}{2}u_r(\rnnm)+\mathcal{B}_1,
\end{split}
\end{equation}
where $|\mathcal{B}_1|\leq ck^3$ for $h<1$. Expanding in Taylor series around
$r^n,r^{n+1}$ we finally have that
$$|u(r^{n+1})-u(r^n)-u(\rnn)+u(\rnnm)-\frac{k}{2}u_r(\rnn)+\frac{k}{2}u_r(\rnnm)|\leq
c k^3,$$ and the result follows from \eqref{n11}.
\begin{remark}
Obviously we assumed $n\geq 1$, since we used the nodal point
$r^{n-\frac{1}{2}}$.
\end{remark}
\end{proof}
\begin{lemma}\label{elem3}
We have
\begin{equation}\label{n12}
\|B_1^{(0)}\|\leq ckh^{\tau+1}+ck^2,
\end{equation}
\begin{equation}\label{n13}
\|B_2^{(1)}\|\leq ckh^{\tau+1}+ck^2,
\end{equation}
\begin{equation}\label{n13*}
\|B_2^{(n+1)}\|\leq ch^{\tau+1}+ck^2,\;\;0\leq n\leq N-1.
\end{equation}
\end{lemma}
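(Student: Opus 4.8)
The plan is to estimate each of the three quantities $B_1^{(0)}$, $B_2^{(1)}$, and $B_2^{(n+1)}$ directly from their definitions in \eqref{n6}, splitting each into a projection-error part and a Taylor-remainder part. Recall that
\[
B_2^{(n+1)}=[R_h(\rnn)-I]u(r^{n+1})-\frac{k^2}{8}R_h(\rnn)u_{rr}(\rnn)+\big[u(r^{n+1})-R_h(r^{n+1})u(r^{n+1})\big],
\]
and similarly for $B_1^{(n)}$ with the roles of $r^{n+1}$, $r^n$ interchanged in the outer index. The idea is to write $-\frac{k^2}{8}R_h(\rnn)u_{rr}(\rnn)=-\frac{k^2}{8}[R_h(\rnn)-I]u_{rr}(\rnn)-\frac{k^2}{8}u_{rr}(\rnn)$, so that every term either carries a projection operator $R_h-I$ or is a pure $\mathcal{O}(k^2)$ Taylor contribution.

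For the two boundary estimates \eqref{n12} and \eqref{n13}, first I would observe that because $U^0=R_h(r^0)u_0$ we expect the projection pieces to cancel up to an $r$-increment of size $k$, whence they contribute $ckh^{\tau+1}$ by Proposition \ref{prop1}, while the explicit $\frac{k^2}{8}u_{rr}$ term contributes $ck^2$. More precisely, in $B_1^{(0)}$ and $B_2^{(1)}$ the difference $R_h(\rnn)u(r^{n})-R_h(r^{n})u(r^{n})$ is the integral $\int_{r^n}^{\rnn}\partial_r[R_h(r)u(r)]\,dr$ of a quantity bounded via Proposition \ref{prop3}, giving a factor of $k$ times $h^{\tau+1}$; applying \eqref{L2e} to the remaining projection error and bounding the additive term \eqref{adpr} in $L^2$ by $ck^2$ yields both \eqref{n12} and \eqref{n13}.

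For the general bound \eqref{n13*}, the key point is that no extra factor of $k$ is available, so I would estimate term by term: the projection error $[R_h(\rnn)-I]u(r^{n+1})$ is $\mathcal{O}(h^{\tau+1})$ by \eqref{L2e}, the term $[R_h(\rnn)-I]u_{rr}(\rnn)$ scaled by $\tfrac{k^2}{8}$ is $\mathcal{O}(k^2 h^{\tau+1})$ and hence absorbed, while $-\tfrac{k^2}{8}u_{rr}(\rnn)$ is $\mathcal{O}(k^2)$; the outer difference $u(r^{n+1})-R_h(r^{n+1})u(r^{n+1})$ is again $\mathcal{O}(h^{\tau+1})$ by \eqref{L2e}. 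Summing these gives the claimed $ch^{\tau+1}+ck^2$. The main obstacle I anticipate is bookkeeping the mismatch between the projection evaluated at $\rnn$ and at $r^{n+1}$ (or $r^n$): this cross-term must be handled by writing it as an $r$-integral of $\partial_r(R_h(r)u(r))$ and invoking Proposition \ref{prop3}, exactly as in the proofs of Lemmas \ref{elem1} and \ref{elem2}, taking care that in the boundary cases \eqref{n12}–\eqref{n13} this integral is over an interval of length $k/2$ and thus supplies the extra $k$ that the general estimate \eqref{n13*} lacks.
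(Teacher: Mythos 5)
Your direct term-by-term estimate of \eqref{n13*} is correct and is in fact shorter than the paper's argument, which instead telescopes $B_2^{(n+1)}$ back to $B_2^{(1)}$ through Lemma \ref{elem2}: each of $[R_h(\rnn)-I]u(r^{n+1})$ and $u(r^{n+1})-R_h(r^{n+1})u(r^{n+1})$ is $O(h^{\tau+1})$ by \eqref{L2e}, and $\tfrac{k^2}{8}R_h(\rnn)u_{rr}(\rnn)$ is $O(k^2)$, so \eqref{n13*} follows with no recursion.

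The boundary estimates \eqref{n12}--\eqref{n13} are where the proposal breaks down. The identity you invoke is false: $\int_{r^0}^{r^{\frac{1}{2}}}\partial_r[R_h(r)u(r)]\,dr = R_h(r^{\frac{1}{2}})u(r^{\frac{1}{2}})-R_h(r^0)u(r^0)$, not $R_h(r^{\frac{1}{2}})u(r^{0})-R_h(r^0)u(r^0)$; the two differ by $R_h(r^{\frac{1}{2}})[u(r^{\frac{1}{2}})-u(r^0)]$, which is genuinely $O(k)$. Worse, even the correct integrand $\partial_r[R_h(r)u(r)]=\partial_r[R_h(r)u(r)-u(r)]+u_r(r)$ contains the $O(1)$ contribution $u_r$, so Propositions \ref{prop3}/\ref{prop4} --- which control only $\partial_r[R_h(r)u(r)-u(r)]$, and of which you need the $L^2$ version, Proposition \ref{prop4}, not \ref{prop3}, to reach order $h^{\tau+1}$ --- do not give the claimed $kh^{\tau+1}$. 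As written your argument yields only $\|B_1^{(0)}\|\le ck$, which would degrade the final estimate of Theorem \ref{thmer} to first order in $k$. (The appeal to $U^0=R_h(r^0)u_0$ is also immaterial: $B_1^{(0)}$ does not involve $U^0$.) Two repairs are available. The paper's route is to Taylor-expand $u(r^0)$ about $r^{\frac{1}{2}}$ to third order, so that $-\tfrac{k^2}{8}R_h(r^{\frac{1}{2}})u_{rr}(r^{\frac{1}{2}})$ in the definition of $B_1^{(0)}$ cancels the quadratic Taylor term exactly, leaving $\int_{r^0}^{r^{\frac{1}{2}}}\partial_r[R_h(r)u(r)-u(r)]\,dr-\tfrac{k}{2}[R_h(r^{\frac{1}{2}})-I]u_r(r^{\frac{1}{2}})+\big(u(r^{\frac{1}{2}})-u(r^0)-\tfrac{k}{2}u_r(r^{\frac{1}{2}})\big)+O(k^3)$, whose pieces are $O(kh^{\tau+1})$, $O(kh^{\tau+1})$ and $O(k^2)$ respectively; \eqref{n13} then follows from \eqref{n12} and Lemma \ref{elem1} with $n=0$. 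Alternatively, write $[R_h(r^{\frac{1}{2}})-R_h(r^0)]u(r^0)=\int_{r^0}^{r^{\frac{1}{2}}}(\partial_rR_h)(r)\,u(r^0)\,dr$ and apply Proposition \ref{prop4} to the $r$-independent function $v\equiv u(r^0)$ (so $\partial_r v=0$), which gives $\|(\partial_rR_h)(r)u(r^0)\|\le ch^{\tau+1}\|u(r^0)\|_{\tau+1}$ and hence the factor $kh^{\tau+1}$ you wanted; but this frozen-argument step is precisely the missing idea, and it is not what your integral representation asserts.
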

\begin{proof}
We use the fact that for $r_9\in(r^0,r^{\frac{1}{2}})$
$$u(r^0)=u(r^{\frac{1}{2}})-\frac{k}{2}u_r(r^{\frac{1}{2}})+\frac{k^2}{8}u_{rr}(r^{\frac{1}{2}})+\frac{k^3}{8\cdot
3!}u_{rrr}(r_9),$$ and obtain
\begin{equation*}
\begin{split}
B_1^{(0)}=&\int_{r^0}^{r^{\frac{1}{2}}}[\partial_rR_h(r)u(r)-u_r(r)]dr-\frac{k}{2}[R_h(r^{\frac{1}{2}})u_r(r^{\frac{1}{2}})
-u_r(r^{\frac{1}{2}})]\\
&+u(r^{\frac{1}{2}})-u(r^0)-\frac{k}{2}u_r(r^{\frac{1}{2}})+\mathcal{B}_2,
\end{split}
\end{equation*}
for $|\mathcal{B}_2|\leq c k^2$. Finally, using
$$|u(r^{\frac{1}{2}})-u(r^0)-\frac{k}{2}u_r(r^{\frac{1}{2}})|\leq ck^2,$$
we arrive at \eqref{n12}.

By Lemma \ref{elem1} applied for $n=0$, we get
$$\|B_2^{(1)}-B_1^{(0)}\|\leq ckh^{\tau+1},$$
so using \eqref{n12} the estimate \eqref{n13} follows.

Using that
$$\|B_2^{(n+1)}\|\leq \|B_2^{(n+1)}-B_2^{(n)}\|+\|B_2^{(n)}-B_2^{(n-1)}\|+\cdots+\|B_2^{(2)}-B_2^{(1)}\|+\|B_2^{(1)}\|,$$
and Lemma \ref{elem2} we obtain $$\|B_2^{(n+1)}\|\leq
\|B_2^{(1)}\|+ch^{\tau+1}+ck^2,$$ and by \eqref{n13} we arrive at
\eqref{n13*}.
\end{proof}
We now estimate $\theta^1$.
\begin{lemma}\label{elem4}
If $k=O(h)$ then
\begin{equation}\label{n14}
\|\theta^1\|\leq ch^{\tau+1}+ck^2.
\end{equation}
\end{lemma}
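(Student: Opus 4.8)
The plan is to treat $\theta^1$ as the base case of the one-step recursion \eqref{n5}, exploiting the fact that the initial choice $U^0:=R_h(r^0)u_0$ makes the projection error vanish at the first node. Since $u_0=u(r^0)$, we have $\theta^0=U^0-R_h(r^0)u(r^0)=0$, and hence by \eqref{n7} the quantity $\theta_1^0=\theta^0-B_1^{(0)}=-B_1^{(0)}$ is controlled entirely by Lemma \ref{elem3}. This is the key simplification that lets the starting value avoid any Gr\"onwall-type accumulation.

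First I would apply \eqref{n5} with $n=0$. This is legitimate because the standing hypotheses $k<1$ and $k\le c\,h^{1/2}$ used to derive \eqref{n4}--\eqref{n5} are guaranteed by the assumption $k=O(h)$ (for $h\le 1$ one has $k\le c\,h\le c\,h^{1/2}$). Substituting $\theta_1^0=-B_1^{(0)}$ gives
\[
\|\theta_2^1\|\le\Big(\tfrac{1+ck}{1-ck}\Big)\,\|B_1^{(0)}\|+\mathcal{A},
\]
and, choosing $k$ small enough that $\tfrac{1+ck}{1-ck}\le 2$, the bound $\|B_1^{(0)}\|\le ckh^{\tau+1}+ck^2$ of \eqref{n12} together with $\mathcal{A}\le \tfrac{ck(h^{\tau+1}+k^2)}{1-ck}$ yields $\|\theta_2^1\|\le ckh^{\tau+1}+ck^2$ (the extra factor $k$ only sharpening the estimate).

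Next I would recover $\theta^1$ from $\theta_2^1$ via \eqref{n7}, writing $\theta^1=\theta_2^1+B_2^{(1)}$ and using the triangle inequality
\[
\|\theta^1\|\le\|\theta_2^1\|+\|B_2^{(1)}\|.
\]
Invoking $\|B_2^{(1)}\|\le ckh^{\tau+1}+ck^2$ from \eqref{n13} together with the bound on $\|\theta_2^1\|$ just obtained gives $\|\theta^1\|\le ckh^{\tau+1}+ck^2$; since $k\le 1$, absorbing the factor $k$ in the first term yields $\|\theta^1\|\le ch^{\tau+1}+ck^2$, as claimed.

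There is no deep obstacle here; the only point requiring care is the bookkeeping that isolates this base case from the general recursion. The reason $\theta^1$ must be estimated separately is that Lemma \ref{elem2}, which drives the accumulation for $n\ge 1$, uses the half-node $r^{n-\frac{1}{2}}$ and is therefore unavailable at $n=0$. The vanishing of $\theta^0$ and the sharp $O(k)$ prefactors in $\|B_1^{(0)}\|$ and $\|B_2^{(1)}\|$ are precisely what keep the starting value at the optimal order $h^{\tau+1}+k^2$.
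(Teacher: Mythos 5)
Your proof is correct, but it is not the route the paper takes. The paper proves Lemma \ref{elem4} by a direct energy argument on $\theta^1$ itself: it writes the error equation with the projections evaluated at the nodes $r^0,r^1$ (rather than at the midpoint), controls the $r$-dependence of the sesquilinear form through a chain of perturbations $\mathcal{A}_1,\dots,\mathcal{A}_4$ of the evaluation point, and then invokes the inverse inequality \eqref{2.36n}; the hypothesis $k=O(h)$ is needed there in an essential way to absorb the resulting factor $h^{-1}$ (terms of the form $ckh^{-1}\,kh^{\tau}$ and $ckh^{-1}k^2$). You instead treat $n=0$ as the base case of the already-derived one-step recursion \eqref{n5}, using $\theta^0=0$ so that $\theta_1^0=-B_1^{(0)}$, and then import the bounds \eqref{n12} and \eqref{n13} on $B_1^{(0)}$ and $B_2^{(1)}$ from Lemma \ref{elem3}. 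This is legitimate: \eqref{n5} is stated for all $0\le n\le N-1$ and its derivation involves only the nodes $r^0,r^{1/2},r^1$ when $n=0$, and Lemma \ref{elem3} is established independently of Lemma \ref{elem4} (via Lemma \ref{elem1} and the projection estimates), so there is no circularity. What your argument buys is brevity and a slightly weaker mesh condition --- it only needs $k\le c\,h^{1/2}$, which is what \eqref{n5} requires, rather than the full $k=O(h)$ --- and it even yields the sharper bound $\|\theta^1\|\le c\,k\,h^{\tau+1}+c\,k^2$ together with a direct estimate of $\|\theta^1-B_2^{(1)}\|$, which is the quantity Lemma \ref{elem6} actually feeds into its recursion. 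What the paper's longer computation buys is an estimate of $\theta^1$ that does not pass through the $\theta_1^n,\theta_2^{n+1}$ splitting at all, at the cost of a second application of the inverse inequality.
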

\begin{proof}
We use the continuous problem and the fact that $\theta^0=0$, set
$\phi=\theta^1$ in the fully discrete scheme, take real parts
and use the inverse inequality to obtain
\begin{equation*}
\begin{split}
&\Big{(}\frac{\theta^{n+1}-\theta^n}{k},\phi\Big{)}=-\Big{(}\frac{R_h(r^{n+1})u(r^{n+1})
-R_h(r^n)u(r^n)}{k}-u_r(r^{n+1/2}),\phi\Big{)}\\
&-{\rm
i}\mathcal{B}\Big{(}r^{n+1/2};\frac{\theta^{n+1}+\theta^n}{2},\phi\Big{)}+
{\rm
i}\Big{(}[\beta(r^{n+1/2})+\delta]\frac{\theta^{n+1}+\theta^n}{2},\phi\Big{)}\\
&-{\rm
i}\mathcal{B}\Big{(}r^{n+1/2};\frac{R_h(r^{n+1})u(r^{n+1})+R_h(r^n)u(r^n)}{2}-u(r^{n+1/2}),\phi\Big{)}\\
&+{\rm
i}\Big{(}[\beta(r^{n+1/2})+\delta]\frac{R_h(r^{n+1})u(r^{n+1})+R_h(r^n)u(r^n)}{2}-u(r^{n+1/2}),\phi\Big{)}.
\end{split}
\end{equation*}
Obviously, if $\mathcal{B}$ has smooth coefficients and $g,a,b$
are smooth, it follows that
\begin{equation*}
\begin{split}
&\Big{|}\mathcal{B}(r^{n+1/2};a,b)\Big{|}\leq
\frac{1}{2}\Big{|}\mathcal{B}(r^{n+1};a,b)+\mathcal{B}(r^{n};a,b)\Big{|}
+ck^2\|a\|_1\|b\|_1,\\
&\Big{|}\mathcal{B}(r^{n+1};a,b)-\mathcal{B}(r^{n};a,b)\Big{|}\leq
ck\|a\|_1\|b\|_1,\\
&\Big{|}\mathcal{B}(r;\frac{g(r^{n+1})+g(r^n)}{2},b)-\mathcal{B}(r;g(r^{n+1/2}),b)\Big{|}\leq
ck^2\|b\|_1.
\end{split}
\end{equation*}
So, we get
\begin{equation*}
\begin{split}
&\mathcal{B}\Big{(}r^{n+1/2};\frac{R_h(r^{n+1})u(r^{n+1})+R_h(r^n)u(r^n)}{2}-u(r^{n+1/2}),\phi\Big{)}=\\
&\mathcal{B}\Big{(}r^{n+1/2};\frac{R_h(r^{n+1})u(r^{n+1})+R_h(r^n)u(r^n)}{2}
-\frac{u(r^{n+1})+u(r^n)}{2},\phi\Big{)}+\mathcal{A}_1=\\
&\frac{1}{2}\mathcal{B}\Big{(}r^{n+1};\frac{R_h(r^{n+1})u(r^{n+1})+R_h(r^n)u(r^n)}{2}
-\frac{u(r^{n+1})+u(r^n)}{2},\phi\Big{)}\\
&+\frac{1}{2}\mathcal{B}\Big{(}r^{n};\frac{R_h(r^{n})u(r^{n+1})+R_h(r^n)u(r^n)}{2}
-\frac{u(r^{n+1})+u(r^n)}{2},\phi\Big{)}+\mathcal{A}_2+\mathcal{A}_1=\\
&\frac{1}{2}\mathcal{B}\Big{(}r^{n+1};\frac{R_h(r^n)u(r^n)}{2}
-\frac{u(r^n)}{2},\phi\Big{)}+\frac{1}{2}\mathcal{B}\Big{(}r^{n};\frac{R_h(r^{n+1})u(r^{n+1})}{2}
-\frac{u(r^{n+1})}{2},\phi\Big{)}\\
&+\mathcal{A}_2+\mathcal{A}_1=\\
&\frac{1}{2}\mathcal{B}\Big{(}r^{n};\frac{R_h(r^n)u(r^n)}{2}
-\frac{u(r^n)}{2},\phi\Big{)}+\mathcal{A}_3\\
&+\frac{1}{2}\mathcal{B}\Big{(}r^{n+1};\frac{R_h(r^{n+1})u(r^{n+1})}{2}
-\frac{u(r^{n+1})}{2},\phi\Big{)}+\mathcal{A}_4+\mathcal{A}_2+\mathcal{A}_1=\\
&\mathcal{A}_3 +\mathcal{A}_4+\mathcal{A}_2+\mathcal{A}_1,
\end{split}
\end{equation*}
where
\begin{equation*}
\begin{split}
&|\mathcal{A}_1|\leq ck^2\|\phi\|_1,\\
&|\mathcal{A}_2|\leq
ck^2\|\phi\|_1\;\;\;\;(\mbox{since}\;\;\|R_h(r)u(r)\|_1\leq
ch^{\tau}+c),\\
&|\mathcal{A}_3|,|\mathcal{A}_4|\leq
ckh^{\tau}\|\phi\|_1\;\;\;\;(\mbox{since}\;\;\|R_h(r)u(r)-u(r)\|_1\leq
ch^{\tau}).
\end{split}
\end{equation*}
Therefore, we obtain setting $n=0$, $\phi=\theta^1$ and using the
inverse inequality
\begin{equation*}
\begin{split}
\|\theta^1\|^2&\leq
ck[h^{\tau+1}+kh^{\tau}+k^2]\|\theta^1\|_1+ck\|\theta^1\|^2\\
&\leq
ckh^{-1}[h^{\tau+1}+kh^{\tau}+k^2]\|\theta^1\|+ck\|\theta^1\|^2.
\end{split}
\end{equation*}
So for $\mathcal{O}(k)=\mathcal{O}(h)$ the result follows.
\end{proof}
\begin{lemma}\label{elem6}
If $\mathcal{O}(k)=\mathcal{O}(h)$ then for any $n\geq 0$
\begin{equation}\label{n16}
\|\theta^{n+1}\|\leq ch^{\tau+1}+ck^2.
\end{equation}
\end{lemma}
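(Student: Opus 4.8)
The plan is to run a discrete Gr\"onwall argument on the recursion \eqref{n8}, anchored at the base case $\|\theta^1\|$ supplied by Lemma \ref{elem4}. The recursion \eqref{n8} is not closed in $\theta^n$ alone, since its right-hand side also features $B_2^{(n)}$, $B_2^{(n+1)}$ and $B_1^{(n)}$; the natural move is therefore to introduce the shifted error $\zeta^n := \theta^n - B_2^{(n)}$, for which \eqref{n8} becomes a clean one-step inequality.

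First I would rewrite \eqref{n8}, for $1\leq n\leq N-1$, as
\begin{equation*}
\|\zeta^{n+1}\|\leq \Big(\tfrac{1+ck}{1-ck}\Big)\|\zeta^{n}\| + \mathcal{A} + \Big(\tfrac{1+ck}{1-ck}\Big)\Big[\|B_2^{(n)}-B_2^{(n+1)}\| + \|B_2^{(n+1)}-B_1^{(n)}\|\Big].
\end{equation*}
Invoking Lemma \ref{elem1} for $\|B_2^{(n+1)}-B_1^{(n)}\|$ and Lemma \ref{elem2} for $\|B_2^{(n)}-B_2^{(n+1)}\|$, together with $\mathcal{A}\leq ck(h^{\tau+1}+k^2)/(1-ck)$, all the inhomogeneous terms carry a common factor $k$, so for $k$ small enough (using $(1+ck)/(1-ck)\leq 1+c'k$) the recursion collapses to
\begin{equation*}
\|\zeta^{n+1}\|\leq (1+c'k)\,\|\zeta^{n}\| + C\,k\,(h^{\tau+1}+k^2),\qquad 1\leq n\leq N-1.
\end{equation*}

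Next I would iterate this inequality down to $\zeta^1$. Since $nk\leq r_{\rm max}-r_{\rm min}$ is bounded, the amplification factor satisfies $(1+c'k)^{n}\leq e^{c'(r_{\rm max}-r_{\rm min})}\leq C$, and the accumulated contribution is a geometric sum $C k (h^{\tau+1}+k^2)\sum_{j=0}^{n-1}(1+c'k)^{j}$, in which the factor $k$ is cancelled by the $O(1/k)$ number of terms, leaving $C(h^{\tau+1}+k^2)$. For the base case I would use $\zeta^1=\theta^1-B_2^{(1)}$ with Lemma \ref{elem4} to bound $\|\theta^1\|$ and the estimate \eqref{n13} of Lemma \ref{elem3} to bound $\|B_2^{(1)}\|$, both by $c(h^{\tau+1}+k^2)$ once $k<1$. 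This yields $\|\zeta^{n+1}\|\leq c(h^{\tau+1}+k^2)$ for all $n\geq 1$.

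Finally I would return from $\zeta$ to $\theta$ via $\theta^{n+1}=\zeta^{n+1}+B_2^{(n+1)}$, bounding $\|B_2^{(n+1)}\|$ by the endpoint estimate \eqref{n13*} of Lemma \ref{elem3}, to obtain $\|\theta^{n+1}\|\leq c(h^{\tau+1}+k^2)$ for $n\geq 1$; the case $n=0$ is precisely Lemma \ref{elem4}. I do not expect a genuine obstacle here: the essential point, and the reason the earlier lemmas were stated with the factor $k$ on the per-step differences $B_2^{(n+1)}-B_1^{(n)}$ and $B_2^{(n+1)}-B_2^{(n)}$ while the standalone terms $B_2^{(1)}$, $B_2^{(n+1)}$ carry no such factor, is exactly that summing the per-step errors over $O(1/k)$ steps restores a full power of $k$ and keeps the global estimate at the optimal order $h^{\tau+1}+k^2$; the hypothesis $k=O(h)$ enters only through the base case Lemma \ref{elem4}.
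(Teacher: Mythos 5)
Your argument is essentially identical to the paper's: the same shifted quantity $\theta^n-B_2^{(n)}$ (called $\mathcal{E}^n$ there), the same one-step recursion from \eqref{n8} fed by Lemmas \ref{elem1} and \ref{elem2}, the same discrete Gr\"onwall iteration with the $O(1/k)$ steps absorbing the per-step factor $k$, and the same return to $\theta^{n+1}$ via \eqref{n13*}. The only cosmetic difference is at the base case, where you bound $\|\theta^1-B_2^{(1)}\|$ directly by $\|\theta^1\|+\|B_2^{(1)}\|$ using \eqref{n13}, while the paper detours through $\|\theta^1-B_1^{(0)}\|+\|B_1^{(0)}-B_2^{(1)}\|$ with Lemma \ref{elem1} at $n=0$ and \eqref{n12}; both are correct and rely on the same ingredients.
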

\begin{proof}
Since $k\leq ch^{\frac{1}{2}}$ then the inequality \eqref{n8}
holds. So, using \eqref{n8} and Lemmas \ref{elem1}-\ref{elem2} we
arrive at
\begin{equation*}
\|\theta^{n+1}-B_2^{(n+1)}\|\leq
c\|\theta^n-B_2^{(n)}\|+ckh^{\tau+1}+ck^3,
\end{equation*}
therefore, setting $\mathcal{E}^{n}:=\theta^n-B_2^{(n)}$ for
$n\geq 1$, we obtain
\begin{equation*}
\begin{split}
\|\mathcal{E}^{n+1}\|&\leq
\Big{(}\frac{1+ck}{1-ck}\Big{)}\|\mathcal{E}^{n}\|+ckh^{\tau+1}+ck^3\\
&\leq
\Big{(}\frac{1+ck}{1-ck}\Big{)}^2\|\mathcal{E}^{n-1}\|+\Big{(}\frac{1+ck}{1-ck}\Big{)}(ckh^{\tau+1}+ck^3)+ckh^{\tau+1}+ck^3\\
&\leq\cdots\\
&\leq
 \Big{(}\frac{1+ck}{1-ck}\Big{)}^n\|\mathcal{E}^{1}\|+\Big{(}\frac{1+ck}{1-ck}\Big{)}^n\sum_{i=1}^n(ckh^{\tau+1}+ck^3)\\
&\leq c\|\mathcal{E}^{1}\|+ch^{\tau+1}+ck^2,
\end{split}
\end{equation*}
(since $k=\frac{1}{N}$ then
$\Big{(}\frac{1+ck}{1-ck}\Big{)}^N\rightarrow e^{c}$ as
$N\rightarrow \infty$ and thus
$\Big{(}\frac{1+ck}{1-ck}\Big{)}^n$ is bounded).

Thus, we get replacing $\mathcal{E}^{n+1},\;\mathcal{E}^{1}$
\begin{equation*}
\|\theta^{n+1}-B_2^{(n+1)}\|\leq
c\|\theta^1-B_2^{(1)}\|+ch^{\tau+1}+ck^2.
\end{equation*}
So we take
\begin{equation}\label{n17}
\|\theta^{n+1}\|-\|B_2^{(n+1)}\|\leq
c\|\theta^1-B_2^{(1)}\|+ch^{\tau+1}+ck^2.
\end{equation}
By \eqref{n13*} $\|B_2^{(n+1)}\|\leq ch^{\tau+1}+ck^2$, thus
\eqref{n17} together with \eqref{n13}, Lemma \ref{elem1} for
$n=0$, and Lemma \ref{elem4}, gives using the estimates of
$\|\theta^1\|$ and $\|B_1^{(0)}\|$
\begin{equation}\label{n18}
\begin{split}
\|\theta^{n+1}\|&\leq
c\|\theta^1-B_2^{(1)}\|+ch^{\tau+1}+ck^2\\
&\leq
c\|\theta^1-B_1^{(0)}\|+c\|B_1^{(0)}-B_2^{(1)}\|+ch^{\tau+1}+ck^2\\
&\leq c\|\theta^1\|+\|B_1^{(0)}\|+ch^{\tau+1}+ck^2\leq
ch^{\tau+1}+ck^2.
\end{split}
\end{equation}
\end{proof}
We are now ready to prove the main error estimate of this section:
\begin{theorem}\label{thmer}
If $\mathcal{O}(k)=\mathcal{O}(h)$ then
\begin{equation}\label{n19}
\|U^{n}-u(r^{n})\|\leq ch^{\tau+1}+ck^2,\;\;0\leq n\leq N.
\end{equation}
\end{theorem}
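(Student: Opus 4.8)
The plan is to obtain \eqref{n19} by the familiar splitting of the global error into an elliptic--projection part and a discrete part, and then to read off the bound from the estimates already assembled. For each $n$ I would write
\begin{equation*}
U^n-u(r^n)=\big(U^n-R_h(r^n)u(r^n)\big)+\big(R_h(r^n)u(r^n)-u(r^n)\big)=\theta^n+\rho^n,
\end{equation*}
where $\theta^n:=U^n-R_h(r^n)u(r^n)$ is exactly the quantity controlled in Lemma \ref{elem6}, and $\rho^n:=R_h(r^n)u(r^n)-u(r^n)$ is the elliptic projection error at $r^n$.

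I would then bound the two summands separately. For the discrete part, the hypothesis $\mathcal{O}(k)=\mathcal{O}(h)$ is precisely what is required for Lemma \ref{elem6}, whose conclusion \eqref{n16} gives $\|\theta^{n+1}\|\leq ch^{\tau+1}+ck^2$ for all $n\geq 0$; together with the trivial case $\theta^0=0$ (which holds since $U^0=R_h(r^0)u_0=R_h(r^0)u(r^0)$ because $u_0=u(r^0)$), this yields $\|\theta^n\|\leq ch^{\tau+1}+ck^2$ for every $0\leq n\leq N$. For the projection part I would invoke the $L^2$-estimate \eqref{L2e} of Proposition \ref{prop1} with $v=u(r^n)$, which gives $\|\rho^n\|\leq ch^{\tau+1}\|u(r^n)\|_{\tau+1}\leq ch^{\tau+1}$, the last inequality using the assumed $H^{\tau+1}$-regularity of the exact solution uniformly in $r$.

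Finally, the triangle inequality
\begin{equation*}
\|U^n-u(r^n)\|\leq \|\theta^n\|+\|\rho^n\|\leq \big(ch^{\tau+1}+ck^2\big)+ch^{\tau+1}\leq ch^{\tau+1}+ck^2
\end{equation*}
closes the argument for $0\leq n\leq N$, which is precisely \eqref{n19}. I do not expect any genuine obstacle at this stage: all of the real difficulty has been front-loaded into Lemma \ref{elem6}, and through it into the $r$-dependent projection estimates of Propositions \ref{prop1}--\ref{prop4} and the Taylor-type bookkeeping of Lemmas \ref{elem1}--\ref{elem4}. The only points deserving a line of care are verifying the $n=0$ case via $\theta^0=0$ and noting that the constant $c$ depends on the exact solution solely through the finite quantities $\max_r\|u(r)\|_{\tau+1}$ (and the higher $r$-derivative norms already invoked upstream).
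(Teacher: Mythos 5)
Your proposal is correct and is essentially identical to the paper's own (very brief) proof: the authors likewise split $U^n-u(r^n)$ into $\theta^n=U^n-R_h(r^n)u(r^n)$ plus the projection error, bound $\|\theta^n\|$ by Lemma \ref{elem6} (with $\theta^0=0$), and absorb the projection error via the $ch^{\tau+1}$ estimate of Proposition \ref{prop1}. Your added remarks on the $n=0$ case and on the dependence of the constant on $\max_r\|u(r)\|_{\tau+1}$ are sound but not part of the paper's argument.
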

\begin{proof}
Obviously, using Lemma \ref{elem6} and the fact that
$\theta^0=0$, it follows that
$$\|U^{n}-u(r^{n})\|\leq \|\theta^{n}\|+ch^{\tau+1}\leq
ch^{\tau+1}+ck^2.$$
\end{proof}
%

\section{Global Elliptic Regularity}
%
In this section, we present a general Global Elliptic Regularity
Theorem for complex elliptic operators with mixed Dirichlet-Robin
boundary conditions, in rectangles of $\mathbb{R}^2$. Our proof
follows that of \cite{ref26} which deals with the Dirichlet
problem for real operators. In our approach, the main idea is
that if the trace terms in the weak formulation of the problem
vanish due to the boundary conditions, for suitably chosen test
functions, then a Global Elliptic Regularity result is proved in
Theorem \ref{3.2.9}. Note that the Robin condition in this
Theorem does not involve any zero order term, while the first
order terms are related to the coefficients of the boundary
problem  so that indeed in the weak formulation, after
integration by parts, the trace integrals vanish. Our result is
established by using the fact that the closure of a rectangle can
be covered by using a finite union of half-balls together with an
open smooth domain in the interior. We then apply  an exponential
transformation and extent our result, in Theorem \ref{3.2.10},
where an arbitrary zero order term is introduced at the Robin
condition of Theorem \ref{3.2.9}.
\begin{theorem}\label{3.2.9}
Let $\om=(0,1)\times (\az_1,\az_2)$ be a rectangular domain in
cartesian coordinates. We consider the following boundary value
problem: We seek a complex-valued function $u$ such that
\begin{equation}\label{1.p}
\begin{aligned}
&Au_{zz}+Bu_{z\az}+Cu_{\az\az}+Du_z+Eu_{\az}+Fu = f\;\;\;\mbox{ in }\;\;\; \om,\\
&u(0,\az) = 0, \\
&u(z,\az_1) = u(z,\az_2) = 0, \\
&a(\az)u_z+b(\az)u_{\az} = 0 \;\;\;\mbox{ at }\;\;\; z=1,
\end{aligned}
\end{equation}
%
%
where $A, B, C\in C^1(\overline{\om})$, $D, E, F\in
L^{\infty}(\om)$, $f\in L^2(\om)$ and $a, b:
[\az_1,\az_2]\rightarrow \mathbb{C}^*$. We also assume that $A,
B, C$ take imaginary values and $\frac{A}{{\rm i}}$,
$\frac{C}{{\rm i}}$ are always positive (or always negative).
Moreover, we assume that
\begin{gather}
|AC|>\frac{|B|^2}{4}, \qquad \text{for any } (z,\az)\in\om, \label{2.pa}\\
\frac{A(1,\az)}{a(\az)} = \frac{B(1,\az)}{2b(\az)},
\qquad\text{for any }\az\in [\az_1,\az_2]. \label{2.pb}
\end{gather}
If $u\in H^1(\om)$ is a weak solution of \eqref{1.p} then the
following elliptic regularity estimate holds
\begin{equation}\label{4p}
u\in H^2(\om)\quad{and}\quad \|u\|_{H^2(\om)}\le
c\,\|f\|_{L^2(\om)}.
\end{equation}
\end{theorem}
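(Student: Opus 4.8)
The plan is to put \eqref{1.p} in divergence form and run the Nirenberg difference--quotient method on a finite covering of $\overline{\om}$ by half-balls, following \cite{ref26}; the one new ingredient is the Robin edge $\{z=1\}$, where \eqref{2.pb} is used to annihilate the boundary terms. Since $A,B,C\in C^1(\overline{\om})$, I first rewrite the principal part as
\[
Au_{zz}+Bu_{z\az}+Cu_{\az\az}={\rm div}\bigl(\mathcal{M}\,\nabla u\bigr)+\widetilde{D}u_z+\widetilde{E}u_\az,\qquad \mathcal{M}:=\begin{pmatrix}A & B/2\\ B/2 & C\end{pmatrix},
\]
absorbing the first-order remainder into $D,E$. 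Multiplying by $\bar\phi$ and integrating by parts, with $\phi$ vanishing on the Dirichlet part $\{z=0\}\cup\{\az=\az_1\}\cup\{\az=\az_2\}$, the only possible boundary integral is the one over $\{z=1\}$, carrying the conormal derivative $Au_z+\tfrac{B}{2}u_\az$. Writing $A(1,\cdot)=\mu a$ and $\tfrac12 B(1,\cdot)=\mu b$ with $\mu:=A/a=B/(2b)$ from \eqref{2.pb}, this conormal derivative equals $\mu\,(a u_z+b u_\az)$, which vanishes by the Robin condition in \eqref{1.p}. Hence the Robin condition is \emph{exactly} the natural boundary condition of the form $\mathcal{A}[u,\phi]:=\int_\om \mathcal{M}\nabla u\cdot\overline{\nabla\phi}-\int_\om(\widetilde{D}u_z+\widetilde{E}u_\az+Fu)\bar\phi$, and the weak problem reads $\mathcal{A}[u,\phi]=-\int_\om f\bar\phi$ \textbf{with no surviving boundary term}. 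Dividing by $\mathrm{i}$ makes $\mathcal{M}/\mathrm{i}$ real symmetric with determinant $|AC|-|B|^2/4>0$ by \eqref{2.pa} and trace of fixed sign, hence uniformly definite. This ellipticity yields a G{\aa}rding inequality; combined with the Dirichlet conditions and Poincar\'e's inequality it also gives the a priori bound $\|u\|_1\le c\|f\|$ whenever $\mathcal{A}$ is coercive, as in the applications where the operator carries a sufficiently large zeroth-order term.

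With the boundary terms absent, interior regularity is standard: for $V\subset\subset\om$ and $\zeta\in C_c^\infty(\om)$ I insert $\phi=-D_{-h}^{\ell}\bigl(\zeta^2 D_h^{\ell}u\bigr)$, where $\ell\in\{z,\az\}$ and $D_h^{\ell}v:=\bigl(v(\cdot+h\,e_\ell)-v\bigr)/h$, use the definiteness of $\mathcal{M}/\mathrm{i}$ to bound $\|\zeta\,D_h^{\ell}\nabla u\|$ uniformly in $h$, and let $h\to0$ to obtain $u\in H^2_{\rm loc}(\om)$ with $\|u\|_{H^2(V)}\le c(\|f\|+\|u\|_1)$.

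Near the boundary I use only difference quotients tangential to each edge, so that the test functions stay admissible. Choosing $\zeta=\zeta(\az)$ supported away from $\az=\az_1,\az_2$ and differencing in $\az$, the function $\phi=-D_{-h}^{\az}\bigl(\zeta^2 D_h^{\az}u\bigr)$ vanishes on the whole Dirichlet part (because $u=0$ on $\{z=0\}$ gives $D_h^{\az}u=0$ there, while $\zeta=0$ near the lateral edges), and on $\{z=1\}$ it produces no boundary term thanks to the vanishing just established. This controls $u_{z\az},u\daz\in L^2$ on $[0,1]\times(\az_1+\epsilon,\az_2-\epsilon)$, i.e. along the interiors of both $\{z=0\}$ and the Robin edge $\{z=1\}$; the missing derivative $u_{zz}$ is then read off the equation, $Au_{zz}=f-Bu_{z\az}-Cu\daz-Du_z-Eu_\az-Fu$, upon dividing by $A\neq0$. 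Symmetrically, $\zeta=\zeta(z)$ supported away from $z=0,1$ and differencing in $z$ controls $u_{zz},u_{z\az}$ along the interiors of the lateral Dirichlet edges, and $u\daz$ is recovered from the equation using $C\neq0$.

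The main obstacle is the four corners: there a direction tangential to one edge is normal to the other, so neither difference quotient yields an admissible test function on a neighbourhood meeting both edges, and this is precisely the region left uncovered as the admissible half-balls shrink to zero radius at the corner. The two Dirichlet--Dirichlet corners $(0,\az_1),(0,\az_2)$ I would handle by reflection across both edges, which extends $u$ to a full neighbourhood with Lipschitz principal coefficients and reduces matters to the interior estimate. The two Dirichlet--Robin corners $(1,\az_1),(1,\az_2)$ are the crux: here I would reflect across the adjacent lateral Dirichlet edge, choosing the reflections of $a$ and $b$ (even and odd, respectively) so that both the Robin relation and the compatibility \eqref{2.pb} are preserved, whereupon the corner becomes an interior point of an enlarged Robin edge and is covered by the tangential estimate above. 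The delicate points---this is where \eqref{2.pb} really earns its keep---are to verify that the reflected principal coefficients remain regular enough for the difference-quotient argument and that the reflected conormal structure still annihilates the trace terms. Summing the local $H^2$ bounds over the finite covering (one interior set, the four edge strips, the four corner neighbourhoods) yields $\|u\|_{H^2(\om)}\le c(\|f\|+\|u\|_1)$, and the a priori $H^1$ control then gives $u\in H^2(\om)$ together with \eqref{4p}.
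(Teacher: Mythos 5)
Your proposal follows essentially the same route as the paper's proof: rewrite the principal part in divergence form, cover $\partial\om$ by finitely many half-balls, use tangential difference quotients $-D^{-h}(J^2D^h u)$ as test functions so that they vanish on the Dirichlet edges and on the curved boundary, invoke \eqref{2.pb} to make the conormal derivative $Au_z+\tfrac{B}{2}u_{\az}$ on $\{z=1\}$ a multiple of $au_z+bu_{\az}$ so that the Robin trace term drops out, and then combine the local $H^2$ bounds with interior regularity and absorb $\|u\|_{H^1(\om)}$ via ellipticity/coercivity. The one place you go beyond the paper is the explicit treatment of the four corners by reflection (left as a sketch); the paper's proof does not address the corners separately, simply asserting that the sets $\vp_i$ together with an interior domain cover $\overline{\om}$.
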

\begin{proof}
We consider the rectangle $\om$. Obviously its boundary is the
union of four linear segments and we write $\partial \om =
\cup_{i=1}^4\partial \om_i$ (cf. Figure 3). Let $\up_i =
B^o(k_i,r_i)\cap\overline{\om}$, be a half-ball in $\mathbb{R}^2$
in $\overline{\om}$ laying at $\partial\om$ of range $r_i$ and of
diameter in $\partial\om_i$. We define its boundary by
$\partial\up_i:=\partial\up_{ih}\cup\partial\up_{ic}$, where
$\partial\up_{ih}$ is the diameter such that $\partial\up
_{ih}\subseteq \partial \om_i$, and $\partial\up _{ic}$ is the
semicircle of range $r_i$ such that $\up_i\subset\overline{\om}$,
we also consider $\vp_i=B^o(k_i,r_i/2)\cap\overline{\om}$, the
half-ball being of the same center $k_i$ as $\up_i$ and of range
$r_i/2$ (cf. Figure 2). Obviously, $\partial{\om}$ is compact,
thus $\partial{\om}$ may be covered by using a finite union of
sets of the form $\vp_i$, while the same union together with a
suitably chosen smooth domain in $\om$ covers $\overline{\om}$. By
\cite{ref26} an interior regularity estimate holds. Our aim is to
prove the regularity estimate
%
\begin{equation}\label{6.p}
\|u\|_{H^2(\vp_i)}\le c\,\|f\|_{L^2(\up_i)},\quad i=1,\ldots,4.
\end{equation}
Interior regularity combined with the estimate (\ref{6.p}) gives
the desired result (\ref{4p}) (cf. \cite{ref26}, pg.~322).
\\
\begin{picture}(40,135)(0,0)
\put(90,30){\includegraphics[scale=.25,angle=0]{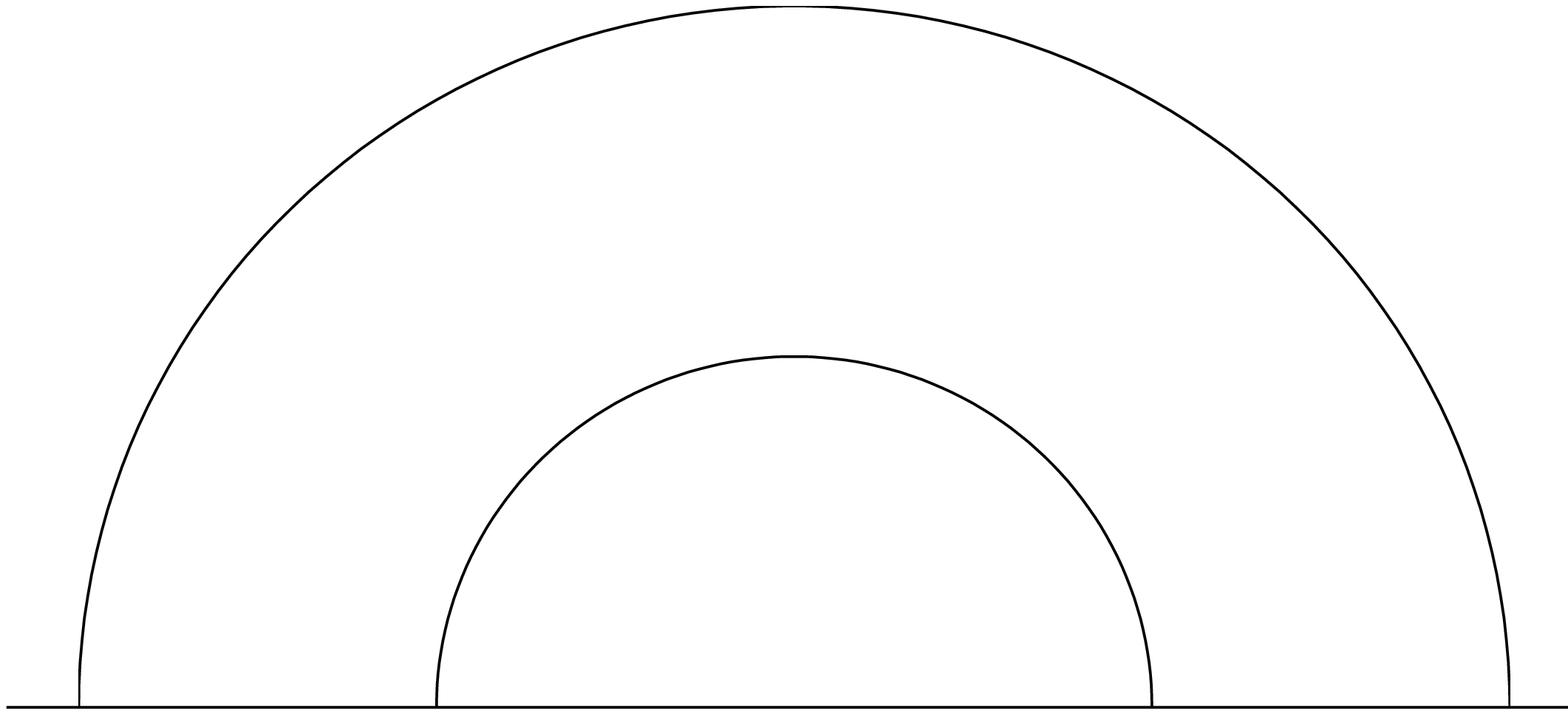}}
\put(55,0){\small{FIGURE 2. Half-balls, curved boundary,
horizontal
boundary.}}\put(95,112){$\mathcal{W}$}\put(210,105){$\partial \up
_{ic}$} \put(178,85){$\up _{i}$}\put(178,48){$\vp _{i}$}
\put(178,30){$\partial \up _{ih}$}\put(85,30){$\partial\mathcal{W}
_{i}$}
\end{picture}
\\

We consider $\phi_i\in H^1(\up_i)$ and let $u$ be the weak
solution of (\ref{1.p}). If $(u,v)_{\up_i}:=
\int_{\up_i}u\bar{v}\,ds$ then we have
\begin{equation}\label{7.p}
\begin{split} (f,\phi_i)_{\up_i}=&-(Au_z,\partial_z\phi_{i})_{\up_i}-
\Big{\{}\Big{(}\frac{B}{2}u_z,\partial_{\az}\phi_{i}\Big{)}_{\up_i}+
\Big{(}\frac{B}{2}u_{\az},\partial_z\phi_{i}\Big{)}_{\up_i}\Big{\}}\\
&-(Cu_{\az},\partial_{\az}\phi_{i})_{\up_i}+(\widetilde{D}u_{z},\phi_i)_{\up_i}+
(\widetilde{E}u_{\az},\phi_i)_{\up_i}+
(Fu,\phi_i)_{\up_i}\\
&+\int_{\partial\up_i}\Big{[}u_z
\Big{(}A,\frac{B}{2}\Big{)}+u_{\az}
\Big{(}\frac{B}{2},C\Big{)}\Big{]}\bar{\phi_i}\overrightarrow{\eta_{i}}ds,
\end{split}
\end{equation}
where
$\widetilde{D}$, $\widetilde{E}$ are the resulting terms after
integration by parts, and $\overrightarrow{\eta_{i}}$ is the
outward unit normal to $\partial\up_i$.
We let $\Omega_i(u,\phi_i):=\int_{\partial\up_i}[u_z
(A,\frac{B}{2})+u_{\az}
(\frac{B}{2},C)]\bar{\phi_i}\overrightarrow{\eta_{i}}ds$, and
define the vector $K_i:=[u_z (A,\frac{B}{2})+u_{\az}
(\frac{B}{2},C)]\bar{\phi_i}$; here $(\cdot,\cdot)$ denotes a
vector of $\mathbb{R}^2$. Then for
$\partial\up_{i}=\partial\up_{ih}\cup
\partial\up_{ic}$ it holds that $\Omega_i(u,\phi_i)=
\int_{\partial\up_{ih}}K_i\overrightarrow{\eta_{i}}ds
+\int_{\partial\up_{ic}}K_i\overrightarrow{\eta_{i}}ds$. Using
the boundary conditions of $u\in H^1(\om)$ we obtain
\begin{equation}\label{15.p}
\begin{aligned}
\Omega_1(u,\phi_1) &= -\int_{\partial\up_{1h}}Au_z\bar{\phi_1}ds+
\int_{\partial\up_{1c}}K_1\overrightarrow{\eta_{1}}ds,\;\;\;\;
 \Omega_2(u,\phi_2)& = \int_{\partial\up_{2c}}
 K_2\overrightarrow{\eta_{2}}ds,\\
\Omega_3(u,\phi_3) &=
-\int_{\partial\up_{3h}}Cu_{\az}\bar{\phi_3}ds+
\int_{\partial\up_{3c}}K_3\overrightarrow{\eta_{3}}ds,\\
\Omega_4(u,\phi_4) &=
\int_{\partial\up_{4h}}Cu_{\az}\bar{\phi_4}ds+
\int_{\partial\up_{4c}}K_4\overrightarrow{\eta_{4}}ds.
\end{aligned}
\end{equation}
Our aim now is to find test functions $\phi_i$ such that in the
weak formulation the trace terms vanish.
\subsubsection*{Assumption 1} We assume that there exist functions $\phi_i$ that satisfy the
following requirements:
\begin{itemize}
\item The test functions are smooth and along the curved boundary $\up_{ic}$ of  $\up_i$ vanish:
$\phi_i\in H^1(\up_i)$, and
$\phi_i=0|\partial\up_{ic},\;i=1,\dots,4$.
\item For $i=1,3,4$, the test functions vanish also along the horizontal boundary $\up_{ih}$ of $\up_i$:
$\phi_1=0$ at $z=0$, $\phi_2$ is arbitrary, $\phi_3=0$ at
$\az=\az_1$, $\phi_4=0$ at $\az=\az_2$.
\end{itemize}
Under this assumption, the sum of trace integrals in the weak
formulation equals zero because $\Omega_i(u,\phi_i)=0$ for any
$i=1,\ldots,4$. The weak formulation (\ref{7.p}) for
$\mathcal{B}(u,\phi_i)_{\up_i}:=(f,\phi_i)_{\up_i}$ becomes
\begin{equation}\label{18.p}
\begin{split} \mathcal{B}(u,\phi_i)_{\up_i}
 =&-(Au_z,\partial_z\phi_{i})_{\up_i}-
\Big{\{}(\frac{B}{2}u_z,\partial_{\az}\phi_{i}\Big{)}_{\up_i}+\Big{(}\frac{B}{2}u_{\az},\partial_z\phi_{i}
\Big{)}_{\up_i}\Big{\}}\\
&-(Cu_{\az},\partial_{\az}\phi_{i})_{\up_i}+(\widetilde{D}u_{z},\phi_i)_{\up_i}+
(\widetilde{E}u_{\az},\phi_i)_{\up_i}+ (Fu,\phi_i)_{\up_i}.
\end{split}
\end{equation}
%
\begin{picture}(30,148)(0,0)
\put(50,20){\includegraphics[scale=.35,angle=0]{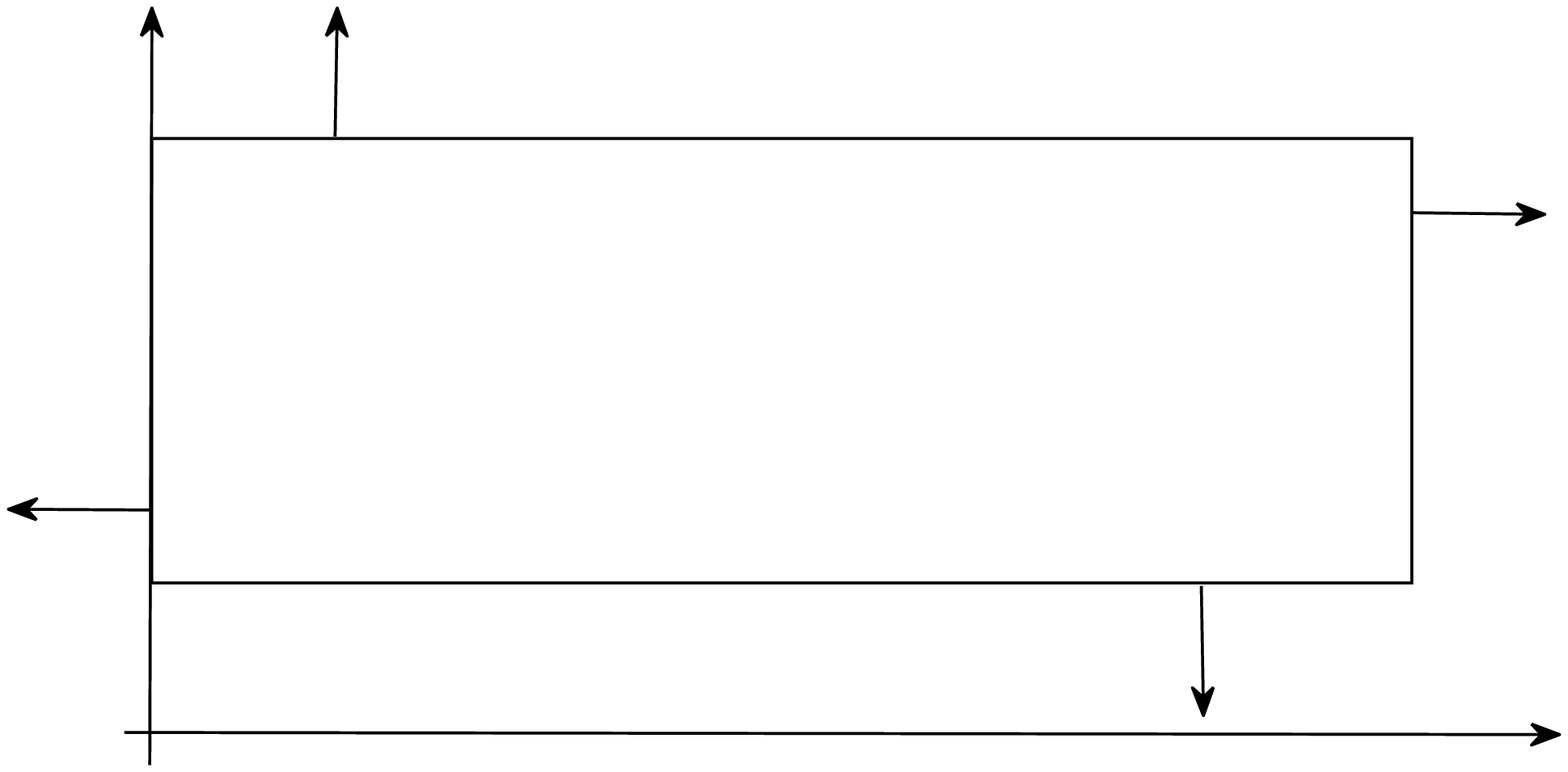}}
\put(90,33){$0$}\put(90,60){$\az_1$}\put(90,115){$\az_2$}
\put(173,118){$\partial \om_4$}
\put(105,125){$\az$}\put(270,33){$z$}\put(257,33){$1$}\put(173,50){$\partial
\om_3$}\put(80,85){$\partial
\om_1$}\put(175,85){$\om$}\put(260,85){$\partial \om_2$}
 \put(100,0){\small{ FIGURE 3. The rectangular domain $\om$.}}
\end{picture}
\\

The next step is to define, properly, for any $i=1,\cdots,4$, test
functions $\phi_i$ satisfying this assumption. We define the
following general cut-off function (\cite{ref26})
\begin{equation}\label{19.p}
 \left.\begin{array}{l}
 J=\begin{cases}
   0\mbox{ in }\mathbb{R}^2-\bl(\tilde{l},r),\\
   1\mbox{ in }\bl(\tilde{l},r/2),\\
   0\leq J\leq 1\mbox{ elsewhere (with $J=0$ near $\partial\up_c$)}.
  \end{cases}
\end{array}\right.
\end{equation}
Here $\up:=\bl^o(\tilde{l},r)\cap\overline{\om}$ is a half-ball in
$\mathbb{R}^2$ of radius $r$ and of center $\tilde{l}$ such that
$\partial\up_h\subseteq \vartheta\om$. Let $\vp$ be the half-ball
in $\mathbb{R}^2$ of center $\tilde{l}$ and of range $r/2$ with
diameter in $\partial\up_h$. Obviously the cut off function $J$
in $\vp$ equals $1$, and near $\partial\up_c$ is $0$. Let
$\tilde{u}$ be a function in $H^1(\om)$ that satisfies the
boundary conditions of problem (\ref{1.p}), we define the
function (\cite{ref26})
\begin{equation}\label{20.p}
 \left.\begin{array}{l}
\tilde{v}:=-D^{-h}(J^2D^h\tilde{u}),\mbox{  with
}D^h\tilde{u}(x):=\frac{\tilde{u}(x+he)-\tilde{u}(x)}{h},\;x\in\up,
\end{array}\right.
\end{equation}
where $h$ is a positive number and $e$ is a unitary vector
(direction) in $\mathbb{R}^2$ parallel to the diameter of the
half-ball $\up$.

In this way for every boundary line ($i=1,\cdots,4$) of the
rectangular domain $\om$ we define a cut-off function $J_i$ and
denote by $e_i$ the unitary direction of the specific boundary
line $\partial\om_i$. We then prove first that $\tilde{v}_i$
defined by these $J_i$ in \eqref{20.p} for the directions $e_i$
are test functions that satisfy the Assumption 1, and in the
sequel we set $\phi_i:=\tilde{v}_i$.

More specifically, for every $i=1,\ldots,4$ we consider
$\up_i=\bl^o(k_i,r_i)\cap
\overline{\om},\;\;\vp_i=\bl^0(k_i,\frac{r_i}{2})\cap\overline{\om},\;\;k_i,\;r_i$
such that $\up_i\subseteq\om,\;\partial\up_{ih}\subseteq\partial
\om_i$ and define the cut-off function
\begin{equation*}
 \left.\begin{array}{l}
J_i:=
  \begin{cases}
    J_i=0\mbox{ in
    }\mathbb{R}^2-\bl(k_i,r_i),\\
    J_i=1\mbox{ in }\bl(k_i,\frac{r_i}{2}),\\
    0\leq J_i\leq 1 \mbox{ elsewhere (with $J_i=0$ near $\partial\up_{ic}$)}.
  \end{cases}
\end{array}\right.
\end{equation*}
Let $\tilde{u}$ be a function in $H^1(\om)$ that satisfies the
boundary conditions of problem (\ref{1.p}), we define as
previously the function
\begin{equation}\label{20.pp}
 \left.\begin{array}{l}
\tilde{v}_i:=-D_i^{-h}(J_i^2D_i^h\tilde{u}),\mbox{  with
}D_i^h\tilde{u}(x):=\frac{\tilde{u}(x+he_i)-\tilde{u}(x)}{h},\;x\in\up_i.
\end{array}\right.
\end{equation}
By \cite{ref26}, for any $x\in \up_i$, the following identity
holds
\begin{equation}\label{.p}
 \left.\begin{array}{l}
\tilde{v}_i(x)=-\frac{1}{h^2}(J_i^2(x-he_{i})[\tilde{u}(x)-\tilde{u}(x-he_{i})]
-J_i^2(x)[\tilde{u}(x+he_{i})-\tilde{u}(x)]).
\end{array}\right.
\end{equation}

Using the boundary conditions of the elliptic problem and the
identity (\ref{.p}), we will prove
 that $\tilde{v}_i$ satisfy Assumption 1 for any $i=1,\ldots,4$.
\\
\\
\underline{If $i=1$}, then obviously $\tilde{v}_1$ is in
$H^1(\up_1)$. We notice that if $x$ is in $\partial\up_{1c}$ then
$J_1(x)=0$ and for $h$ small enough $J_1(x-he_1)=0$ so by
(\ref{.p}) $\tilde{v}_1(x)=0|\partial\up_{1c}$. Along the
boundary line $\partial\up_{1h}$ holds that $z=0$ and
$e_{1}=(0,1)$. If $x=(0,\az)$ then $\tilde{u}(x)=0$ and
$\tilde{u}(x\pm he_{1})=\tilde{u}(0,\az\pm h)=0$, thus by
(\ref{.p}) follows that $\tilde{v}_1(0,\az)=0$.
\\
\\
\underline{If $i=2$}, then $\tilde{v}_2(x)\in H^1(\up_2)$, and
$e_{2}=(0,1)$. If $x\in \partial\up_{2c}$ then for $h$ small
$J_2(x)=J_2(x-he_2)=0$, thus by (\ref{.p})
$\tilde{v}_2(x)=0|\partial\up_{2c}$.
\\
\\
\underline{If $i=3$}, then $\tilde{v}_3(x)\in H^1(\up_3)$ and
$e_{3}=(1,0)$, for $h$ small. If $x\in\up_{3c}$ then
$J_3(x)=J_3(x-he_3)=0$ thus $\tilde{v}_3(x)=0|\partial\up_{3c}$.
For $x=(z,\az_1)$ then $\tilde{u}(x)=\tilde{u}(z,\az_1)=0$ and
$\tilde{u}(x\pm he_{3})=\tilde{u}(z\pm h,\az_1)=0$. By (\ref{.p})
follows that $\tilde{v}_3(z,\az_1)=0$.
\\
\\
\underline{If $i=4$}, then $\tilde{v}_4(x)\in H^1(\up_4)$ and
$e_{4}=(1,0)$, if $x$ is in $\partial\up_{4c}$ then $J_4(x)=0$
and for $h$ small enough $J_4(x-he_4)=0$, thus by (\ref{.p})
$\tilde{v}_4(x)=0|\partial\up_{4c}$. If $x=(z,\az_2)$ then
$\tilde{u}(x)=\tilde{u}(z,\az_2)=0$ and $\tilde{u}(x\pm
he_{4})=\tilde{u}(z\pm h,\az_2)=0$, thus $\tilde{v}_4(z,\az_2)=0$.

Therefore, in all cases Assumption 1 holds and the
trace terms vanish from the weak formulation of the elliptic
problem. If we set $\tilde{u}:=u$, where $u$ is the weak solution
of the elliptic problem satisfying the boundary conditions,
then it can be easily proved (for details see \cite{refthes} and
\cite{ref26}) by use of ellipticity, the weak formulation and
the boundary conditions at $z=0$, $\az=\az_1$, $\az=\az_2$,
that for every half-ball $\vp_i$ it holds
\begin{equation}\label{.p27}
\|u\|_{H^2(\vp_i)}\leq c[\|f\|_{L^2(\up_i)}+\|u\|_{H^1(\up_i)}].
\end{equation}
Finite summation of (\ref{.p27}) over any $\vp_i$ (of type
$i=1,\cdots,4$) and the interior regularity give (\cite{ref26})
\begin{equation}\label{288}
\|u\|_{H^2(\om)}\le c[\|f\|_{L^2(\om)}+\|u\|_{H^1(\om)}].
\end{equation}
Combining (\ref{288}) with ellipticity we obtain the elliptic
regularity result
\begin{equation}\label{er}
\|u\|_{H^2(\om)}\leq c\|f\|_{L^2(\om)}.
\end{equation}
\end{proof}
\begin{remark}\label{gterm}
We note that an analogous result is also valid if in the
assumptions of Theorem \ref{3.2.9}, the homogeneous condition at
$z=1$ is replaced by the non-homogeneous condition
$a(\az)u_{z}+b(\az)u_\az=g$ at $z=1$, for any $g\in
H^{\frac{1}{2}}(\partial\om_R)$, where
$\partial\om_R=\{1\}\times(\az_1,\az_2)$. In this case, in the
weak formulation the trace integral term containing $g$ is hidden
due to ellipticity, leaving at the right-hand side of \eqref{er}
the extra term $c|g|_{\frac{1}{2},\partial\om_R}$ where
$|g|_{\frac{1}{2},\partial\om_R}:=\displaystyle{\inf_{v\in
H_{\om}:v|_{z=1}=g}}\|v\|_1$, for $H_{\om}:= \{u\in H^1(\om):
u|_{z=0} = 0\}$. More specifically, the following elliptic
regularity estimate holds
\begin{equation}\label{er*}
\|u\|_{H^2(\om)}\leq
c\|f\|_{L^2(\om)}+c|g|_{\frac{1}{2},\partial\om_R}.
\end{equation}
\end{remark}
The following theorem extends Theorem~\ref{1.p} in the sense that
we can add at the boundary condition along $z=1$ a zero order
term multiplied by an arbitrary smooth function $c(\az)$.
\begin{theorem}\label{3.2.10}
Under the assumptions of Theorem \ref{3.2.9}, if the boundary
condition of (\ref{1.p}) at $z=1$ has the form
\begin{equation}\label{**}
 \left.\begin{array}{l}
a(\az)u_z+b(\az)u_{\az}+c(\az)u(\az)=0\mbox{ at }z=1,\;\az \in
[\az_1,\az_2],
\end{array}\right.
\end{equation}
with $c$ a smooth complex function of $\az$, then the results of
Theorem \ref{3.2.9} hold (elliptic regularity).
\end{theorem}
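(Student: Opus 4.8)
The plan is to reduce the problem to the setting of Theorem~\ref{3.2.9} by means of an exponential (gauge) transformation that eliminates the zero-order term $c(\az)u$ from the Robin condition at $z=1$ while leaving the principal part of the operator---and hence every structural hypothesis of Theorem~\ref{3.2.9}---untouched. The guiding observation is that multiplication by a smooth nonvanishing factor alters only the lower-order data of both the pde and the boundary condition.

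First I would choose a smooth, complex-valued function $\mu$ on $\overline{\om}$ for which
\begin{equation*}
a(\az)\,\mu_z(1,\az)+b(\az)\,\mu_\az(1,\az)=c(\az),\qquad \az\in[\az_1,\az_2].
\end{equation*}
Since $a,b$ never vanish this is solvable; the simplest choice is to take $\mu$ independent of $z$ and set $\mu(\az):=\int_{\az_1}^{\az}\tfrac{c(s)}{b(s)}\,ds$, so that $\mu_z\equiv 0$ and $b\,\mu_\az=c$ identically, in particular at $z=1$. I would then set $w:=e^{\mu}u$, equivalently $u=e^{-\mu}w$. Substituting $u=e^{-\mu}w$ into the pde and multiplying by $e^{\mu}$, the second-order coefficients $A,B,C$ are unchanged (the factors $e^{\pm\mu}$ cancel in the principal part), while $D,E,F$ are replaced by modified coefficients $\widetilde D,\widetilde E,\widetilde F\in L^{\infty}(\om)$ built from $\mu$ and its first two derivatives, and the right-hand side becomes $\widetilde f:=e^{\mu}f\in L^2(\om)$. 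The homogeneous Dirichlet conditions are preserved because $e^{\pm\mu}$ is nowhere zero, so $w=0$ exactly where $u=0$, namely at $z=0$ and $\az=\az_1,\az_2$. For the condition at $z=1$, inserting $u_z=e^{-\mu}(w_z-\mu_z w)$ and $u_\az=e^{-\mu}(w_\az-\mu_\az w)$ into \eqref{**} gives
\begin{equation*}
a\,w_z+b\,w_\az+\big(c-a\mu_z-b\mu_\az\big)w=0\quad\text{at } z=1,
\end{equation*}
and by the choice of $\mu$ the parenthesized coefficient vanishes, leaving the homogeneous Robin condition $a\,w_z+b\,w_\az=0$.

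Thus $w$ solves a boundary value problem of exactly the form treated in Theorem~\ref{3.2.9}: the principal part, the ellipticity condition \eqref{2.pa}, the sign/imaginary-value hypotheses on $A,B,C$, and the compatibility relation \eqref{2.pb} (which involves only $A,B,a,b$, all unaffected) continue to hold, while $\widetilde D,\widetilde E,\widetilde F\in L^{\infty}(\om)$ and $\widetilde f\in L^2(\om)$. Applying Theorem~\ref{3.2.9} to $w$ yields $w\in H^2(\om)$ with $\|w\|_{H^2(\om)}\le c\,\|\widetilde f\|_{L^2(\om)}\le c\,\|f\|_{L^2(\om)}$, the last inequality because $\mu$ is bounded on $\overline{\om}$. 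Transforming back via $u=e^{-\mu}w$ and using the smoothness of $\mu$ preserves $H^2$-membership and gives $\|u\|_{H^2(\om)}\le c\,\|f\|_{L^2(\om)}$, which is the asserted estimate.

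The crux of the argument is precisely the verification that the gauge transformation touches only lower-order data. The key structural fact is that conjugation by $e^{\mu}$ commutes with the principal symbol up to lower-order commutators, so $A,B,C$ and the boundary coefficients $a,b$ are left intact; consequently the delicate compatibility condition \eqref{2.pb}, on which the vanishing of the trace integrals in the proof of Theorem~\ref{3.2.9} depends, is inherited automatically. The only points requiring care are that $\widetilde D,\widetilde E,\widetilde F$ remain bounded---which needs no more than $\mu\in C^2(\overline{\om})$---and that the back-transformation does not degrade the norm, both of which follow at once from the smoothness and boundedness of $\mu$.
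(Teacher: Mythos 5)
Your proof is correct and follows essentially the same route as the paper: an exponential (gauge) transformation that leaves the principal coefficients $A,B,C$ and the boundary coefficients $a,b$ untouched while absorbing the zero-order term $c(\az)u$ into modified lower-order data, after which Theorem~\ref{3.2.9} applies and one transforms back. Your explicit $z$-independent choice $\mu(\az)=\int_{\az_1}^{\az}c(s)/b(s)\,ds$ is merely a more concrete way of producing the gauge than the paper's appeal to the solvability of $a q_z+b q_\az+c=0$ at $z=1$; the argument is otherwise identical.
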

\begin{proof}
We set $q=q(z,\az)$ and consider the elliptic operator of
(\ref{1.p}), we apply the transformation $u:=\exp(q)w$ and get
the following equivalent problem
\begin{equation}\label{35.p}
\begin{split}
 &Aw_{zz}+Bw_{z\az}+Cw_{\az\az}+D_ww_z+E_ww_{\az}+F_ww =f_w\;\;\;\mbox{ in }\;\;\;\mathcal{W},\\
&w(0,\az)=0,\\
&w(z,\az_1)=w(z,\az_2)=0,\\
&a(\az)w_z+b(\az)w_{\az}+c_w(\az)w(\az)=0\;\;\;\mbox{ at
}\;\;\;z=1,
\end{split}
\end{equation}
where $D_w=2Aq_z+Bq_{\az}+D$, $E_w=Bq_z+2Cq_{\az}+E$,
$f_w=\exp(-q)f$,
$F_w=F+A(q_{zz}+q_z^2)+B(q_{z\az}+q_zq_{\az})+C(q_{\az\az}+q_{\az}^2)+Dq_z+
Eq_{\az}$, and $c_w(\az)=a(\az)q_z+b(\az)q_{\az}+c(\az)$. We
chose $q(z,\az)$ such that $c_w(\az)=0$ or equivalently
\begin{equation}\label{36.p}
 \left.\begin{array}{l}
a(\az)q_z(1,\az)+b(\az)q_{\az}(1,\az)+c(\az)=0\;\mbox{ for any
}\az\in [\az_1,\az_2].
\end{array}\right.
\end{equation}
The relation (\ref{36.p}) can be achieved as
$\frac{a}{b}=\frac{2A}{B}$ is real, for $\frac{a(\az)}{b(\az)}$
smooth and $a(\az),b(\az)$ in $\mathbb{C}^*$, \cite{ref38}. Thus
by (\ref{35.p}) and (\ref{36.p}) the problem is of the form
covered by Theorem \ref{3.2.9}, and consequently
\begin{equation*}
 \left.\begin{array}{l}  w\in H^2(\om)\mbox{ and
}\|w\|_{H^2(\om)}\leq c\|f_w\|_{L^2(\om)}.
\end{array}\right.
\end{equation*}
Obviously $u=\exp(q)w$; therefore, $u\in H^2(\om)$ and
$\|u\|_{H^2(\om)}\leq c\|f\|_{L^2(\om)}$.
\end{proof}
\begin{remark}\label{gterm2}
By using Remark \ref{gterm}, under the assumptions of Theorem
\ref{3.2.10} and if we impose the non-homogeneous condition
$a(\az)u_z+b(\az)u_{\az}+c(\az)u(\az)=g$ at $z=1$, for $g \in
H^{\frac{1}{2}}(\partial\om_R)$ in place of the homogeneous one,
estimate \eqref{er*} follows (the proof is the same as in Theorem
\ref{3.2.10}).
\end{remark}
\begin{remark}
Theorem~\ref{3.2.9} and \ref{3.2.10} or the results of Remarks
\ref{gterm}, \ref{gterm2} can be applied to cylindrical
coordinates for $r$ fixed when $\om=\{(z,r,\az)\in
\mathbb{R}^3\}$, by use of the change of variables
$u(z,\az)=\hat{u}(z,\hat{\az})$ with $\hat{\az}:=\frac{2\pi
r\az}{360}=c_0\az$; then the equivalent problem in cartesian
coordinates is defined in a rectangular domain and satisfies the
assumptions of Theorems \ref{3.2.9} and \ref{3.2.10} or those of
Remarks \ref{gterm}, \ref{gterm2}.
\end{remark}

\section{Numerical experiments}
In this section we report on the outcome of some numerical
experiments performed with the fully discrete scheme
\eqref{3.101} to solve the initial-{} and boundary-value problem
\eqref{divgen}. In the notation established in Section~1, cf.
\eqref{divgen}, we took $\mathfrak{D} = (0,1)^2$,
$r_{\mathrm{min}} = 0$, $r_{\mathrm{max}} = 1$, $b = 0$,
$\beta=1$, $D$ the identity matrix, $\lambda = (0,1)$ and
right-hand side $F$ so that the exact solution is
\begin{equation}
u(r,y,\theta) = e^{2r} y (e^{-y}-1) \theta (1-\theta)^3.
\end{equation}

Our first set of experiments concerns the experimental
verification of the convergence rate of the scheme in the spatial
variable. The measure of the error was the $E(r) = \|u - U\|$ for
$r=nk$, $n=1,2,\ldots$, whereas for other values of $r$ $E$ was
defined by linear interpolation. To determine experimentally the
spatial order of convergence the approximate solution was
computed for $0\le r\le 1$ using a rectangular partition of
$\mathfrak{D}$ using $N = h^{-1}$ ranging from 20 to 160. The
finite element space $S_h$ consisted of piecewise polynomial
functions of degree one. For these runs, very small  $r$-steps
were taken to ensure that the error due to the discretization in
time-like variable $r$ is negligible. The observed error was
recorded at $r = 0.1, 0.5$ and $1$. As usual, the convergence
rate corresponding to two different runs with mesh sizes $h_1,
h_2$ and corresponding errors $E_1$ and $E_2$ is defined to be
$\log(E_1/E_2) / \log(h_1/h_2)$. The results are shown in
Table~\ref{scr1}. It is evident that the convergence rate of the
spatial component of the error is indeed two.

The determination of the accuracy in the time-like variable $r$
is more delicate. We took $h^{-1}=20$ and computed the solution
of our problem up to $r=1$ for various values of $k$. For this
fixed value of $h$ we made a reference calculation with a small
value of $k=k_{\mathrm{ref}} = h / 30$. The corresponding
approximate solution, denoted by $U_{h,\mathrm{ref}}$ differs
from the exact solution by a factor which is almost entirely due
to the spatial discretization. We then define a modified measure
of the error $E^{*}(r)$ as above but with the exact solution
replaced by  the reference solution $U_{h,\mathrm{ref}}$. The
results are shown in Table~\ref{tcr}.
\begin{table}
\caption{Errors $E(r)$ and spatial convergence rate for $k^{-1} =
400$} \centering \label{scr1}
\begin{tabular}{r|cc|cc|cc} \hline
& \multicolumn{2}{c|}{$r = 0.1$} & \multicolumn{2}{c|}{$r =
0.5$}& \multicolumn{2}{c}{$r = 1.0$}\\ \hline $h^{-1}$ & $E(r)$ &
Rate & $E(r)$         & Rate & $E(r)$         & Rate \\ \hline
  10  & 3.5162(-2)  &         & 4.9653(-2) &         & 7.8266(-2) &      \\ \hline
  20  & 7.5323(-3)  & 2.22 & 1.0734(-2) & 2.21 & 1.6921(-2) & 2.21 \\ \hline
  40  & 1.7219(-3)  & 2.13 & 2.4518(-3) & 2.13 & 3.8920(-3) & 2.12 \\ \hline
  80  & 4.0438(-4)  & 2.09 & 5.7998(-4) & 2.08 & 9.2042(-4) & 2.08 \\ \hline
160  & 9.7655(-5)  & 2.05 & 1.4100(-4) & 2.04 & 2.2381(-4) & 2.04
\\ \hline
\end{tabular}
\end{table}
\begin{table}[h]
\caption{Errors $E(r)$ and  $r$-convergence rate for $h^{-1} =
20$} \centering \label{tcr}
\begin{tabular}{r|ccc} \hline
$k^{-1}$ & $E(r)$     & $E^{*}(r)$   & Rate \\ \hline
  144   & 3.8104(-1) & 3.9217(-1) &          \\ \hline
  192   & 7.1839(-1) & 1.7832(-1) & 2.74 \\ \hline
  240   & 1.1771(-2) & 1.0652(-1) & 2.31 \\ \hline
  288   & 1.7638(-2) & 7.1442(-2) & 2.19  \\ \hline
  600   & 8.9952(-3) &                   &         \\ \hline
\end{tabular}
\end{table}

\section*{Acknowledgments}
D.~C. Antonopoulou acknowledges the support of the National
Scholarship Foundation of Greece (Postdoctoral Research in
Greece) and her advisor Prof. V.~A. Dougalis for proposing this
problem that was partially analyzed in her Ph.D. Thesis. G.~D.
Karali is supported by a Marie Curie International Reintegration
Grant within the 7th European Community Framework Programme,
MIRG-CT-2007-200526. G.~D. Karali and M. Plexousakis are
partially supported by the FP7-REGPOT-2009-1 project `Archimedes
Center for Modeling, Analysis and Computation'.
\bibliographystyle{amsplain}
%
\end{document}